\documentclass[journal, twoside]{IEEEtran}

\ifCLASSINFOpdf
\else
\fi

\usepackage{vector, array, amssymb, amscd, amsthm}
\usepackage[cmex10]{amsmath}
\usepackage{algorithmic}
\usepackage{url}
\usepackage{cite} 
\usepackage[geometry]{ifsym}
\usepackage{bbm}
\usepackage{algorithm}
\usepackage{algorithmic}
\usepackage{enumerate}
\usepackage{color}
\usepackage{tikz}
\usepackage{booktabs}
\usetikzlibrary{calc,decorations.pathmorphing}
\usepackage{subfig}
\usepackage{graphicx}

\let\oldFootnote\footnote
\newcommand\nextToken\relax

\renewcommand\footnote[1]{%
    \oldFootnote{#1}\futurelet\nextToken\isFootnote}

\newcommand\isFootnote{%
    \ifx\footnote\nextToken\textsuperscript{,}\fi}

\newtheorem{assumption}{Assumption}
\newtheorem{proposition}{Proposition}
\newtheorem{lemma}{Lemma}
\newtheorem{thm}{Theorem}
\newtheorem{cor}{Corollary}

\newtheorem{remark}{Remark}

\makeatletter

\makeatother

\hyphenation{Piaz-za}
\hyphenation{Elettronica}

\newcommand{\zbf}{z}

\newcommand{\csbf}{\xi}
\newcommand{\dbf}{d}

\begin{document}

\title{Regularized Jacobi iteration for decentralized convex optimization with separable constraints}

\author{Luca~Deori, Kostas~Margellos and Maria~Prandini
\thanks{Research was supported by the European Commission, H2020, under the project UnCoVerCPS, grant number 643921.}\vspace{-1\baselineskip}
\thanks{L. Deori and M. Prandini are with the Dipartimento di Elettronica Informazione e Bioingegneria, Politecnico di Milano,
Piazza Leonardo da Vinci 32, 20133 Milano, Italy, e-mail: \texttt{\{luca.deori, maria.prandini\}@polimi.it}

K. Margellos is with the Department of Engineering Science, University of Oxford, Parks Road,
Oxford, OX1 3PJ, United Kingdom, e-mail: \texttt{kostas.margellos@eng.ox.ac.uk}
}}

\maketitle
\IEEEpeerreviewmaketitle

\begin{abstract}
We consider multi-agent, convex optimization programs subject to separable constraints, where the constraint function of each agent involves only its local decision vector, while the decision vectors of all agents are coupled via a common objective function. We focus on a regularized variant of the so called Jacobi algorithm for decentralized computation in such problems. We first consider the case where the objective function is quadratic, and provide a fixed-point theoretic analysis showing that the algorithm converges to a minimizer of the centralized problem. Moreover, we quantify the potential benefits of such an iterative scheme by comparing it against a scaled projected gradient algorithm.
We then consider the general case and show that all limit points of the proposed iteration are optimal solutions of the centralized problem.
The efficacy of the proposed algorithm is illustrated by applying it to the problem of optimal charging of electric vehicles, where, as opposed to earlier approaches, we show convergence to an optimal charging scheme for a finite, possibly large, number of vehicles.
\end{abstract}

\begin{IEEEkeywords}
Decentralized optimization, Jacobi algorithm, iterative methods, optimal charging control, electric vehicles.
\end{IEEEkeywords}

\section{Introduction} \label{sec:secI}
\IEEEPARstart{O}{ptimization} in multi-agent systems has attracted significant attention in the control and operations research communities, due to its applicability to different domains, e.g., energy systems \cite{Bagagiolo_Bauso_2014}, \cite{Gharesifard_etal_2016}, mobility systems \cite{Hiskens_2013}, \cite{Gan_etal_2013}, \cite{Franc_2014}, robotic networks \cite{Pavone_etal_2007}, etc. In this paper we focus on a specific class of multi-agent optimization programs that are convex and are subject to constraints that are separable, i.e., the constraint function of each agent involves only its local decision vector. The agents' decision vectors are, however, coupled by means of a common objective function.
The considered structure, although specific, captures a wide class of engineering problems, like the electric vehicle optimal charging problem studied in this paper. Solving such problems in a centralized fashion would require agents to share their local constraint functions with each other, while even if this was possible it would unnecessarily increase the computational burden.


To allow for a computationally tractable solution, while accounting for information sharing issues, we adopt an iterative, decentralized perspective, where agents perform local computations in parallel, and then exchange with each other their new solutions, or broadcast them to some central authority that sends an update to each agent.
Admittedly, distributed optimization offers a more general communication setup, however, the fact that agents decision vectors are coupled via the objective function poses additional difficulties, preventing the use of standard distributed algorithms \cite{Nedic_etal_2010}, \cite{Zhu_Martinez_2012}.
Even upon an epigraphic reformulation, the resulting problem will not exhibit the structure typically encountered in distributed optimization since the resulting coupling constraint will not necessarily be of ``budget'' form as required, e.g., in \cite{Koshal_2011}.

\subsection{Related work}
From a cooperative optimization point of view, algorithms for decentralized solutions to convex optimization problems with separable constraints can be found in \cite{Bertsekas_Tsitsiklis_1997,Parikh_Boyd_2013}, and references therein. Two main algorithmic directions can be distinguished, both of them relying on an iterative process. The first one is based on each agent performing at every iteration a local gradient descent step, while keeping the decision variables of all other agents fixed to the values communicated at the previous iteration \cite{Goldstein_1964,Levitin_Poljak_1965,Bertsekas_1976a}. Under certain structural assumptions (differentiability of the objective function and Lipschitz continuity of its gradient), it is shown that this scheme converges to some minimizer of the centralized problem, for an appropriately chosen gradient step-size.


The second direction for decentralized optimization involves mainly the so called Jacobi algorithm, which serves as an alternative to gradient algorithms. The Gauss-Seidel algorithm exhibits similarities with the Jacobi one, but is not of parallelizable nature \cite{Attouch_etal_2013}, unless a coloring scheme is adopted (see Section 1.2.4 in \cite{Bertsekas_Tsitsiklis_1997}). Under the Jacobi algorithmic setup, at every iteration, instead of performing a gradient step, each agent minimizes the common objective function subject to its local constraints, while keeping the decision vectors of all other agents fixed to their values at the previous iteration. A regularized version of the Jacobi algorithm has been proposed in \cite{Cohen1978,Cohen1980}, and more recently in \cite{Patriksson1998,Zhu1995}. Other parallelizable iterative methods are proposed in \cite{Gan_etal_2013,Tarak2016,Necoara2016}, where, however, partially separable cost functions are considered.


From a non-cooperative perspective there has recently been a notable research activity using tools from mean-field and aggregative game theory. Under a deterministic, discrete-time setting like the one considered in the present paper, \cite{Hiskens_2013,Franc_2014,Grammatico_etal_2015} deal with the non-cooperative counterpart of our work, for the case of quadratic objective functions.
In all cases, the considered algorithm is shown to converge not to a minimizer, but to an approximate Nash equilibrium of a related game, and to an exact Nash equilibrium in the limiting case where the number of agents tends to infinity. The recent work of \cite{Paccagnan_etal_2016} shows convergence to an exact Nash equilibrium for a finite number of agents.

\subsection{Contributions of this work and organization of the paper}
In this paper we adopt a cooperative point of view, and consider a regularized Jacobi algorithm similar to the one in \cite{Cohen1978,Cohen1980}.
Our contributions can be summarized as follows:
\begin{enumerate}
\item We establish an equivalence between the set of minimizers of the problem under study and the set of fixed-points of the mapping induced by the considered regularized Jacobi algorithm.
\item For the case where the objective function is quadratic we follow a fixed-point theoretic analysis and show convergence to an optimal solution of the centralized problem counterpart. As opposed to \cite{Cohen1978,Cohen1980}, we provide an explicit calculation of the regularization coefficient that ensures convergence, and show that the convergence properties of this approach outperform the ones of scaled projected gradient algorithms. As such, our algorithm not only serves as the cooperative counterpart of \cite{Paccagnan_etal_2016}, but also enjoys superior convergence properties.
\item In the case of general differentiable objective functions, we provide a proof that all limit points of the proposed iteration are optimal solutions of the centralized problem counterpart; such a proof is not provided in \cite{Cohen1978,Cohen1980}.
\item We extend the results of \cite{Hiskens_2013,Franc_2014,Grammatico_etal_2015} on electric vehicle charging control, achieving convergence to an optimal charging scheme with a finite number of vehicles. This serves also as an extension of \cite{Gan_etal_2013}, where convergence to the optimal objective value and not to the optimal charging solution was provided.
\end{enumerate}
The results obtained here extend significantly our earlier work in \cite{Deori_etal_2016}, where only the case of quadratic functions was considered, omitting various proofs due to space limitations,  and no formal comparison with the gradient methods was provided.

The rest of the paper is organized as follows.
Section \ref{sec:secII} introduces the problem under study and states the proposed algorithm. In Section \ref{sec:secIII} we provide the main convergence result for the case where the objective function is quadratic, and a comparison with scaled projected gradient methods. Section \ref{sec:secIV} provides a convergence analysis for the general case of differentiable objective function, while the proof is deferred to the Appendix. Section \ref{sec:secV} provides application of the developed scheme to the problem of optimal charging of electric vehicles and includes an extensive simulation study, while Section \ref{sec:secVI} concludes the paper and outlines some directions for future research.

\section{Decentralized problem formulation} \label{sec:secII}
\subsection{Problem statement} \label{sec:statement}
We consider the following multi-agent constrained optimization problem
\begin{align}
\mathcal{P}: &\min_{\{x^i \in \mathbb{R}^{n_i}\}_{i=1}^m} f(x^1,\ldots,x^m) \label{eq:Pobj} \\
&\text{subject to } \nonumber \\
&x^i \in X^i, \text{ for all } i=1,\ldots,m, \label{eq:Pcon}
\end{align}
where each agent $i$, $i=1,2,\dots,m$, has a local decision vector $x^i\in \mathbb{R}^{n_i}$ and a local constraint set $X^i\subseteq \mathbb{R}^{n_i}$, and cooperates to determine a minimizer of
$f: \mathbb{R}^{n_1} \times \ldots \times \mathbb{R}^{n_m} \to \mathbb{R}$, which couples its decision vector with those of the other agents.

We study, in particular, the case when the following assumption holds.
\begin{assumption}\label{ass:Convexity}
The objective function $f: \mathbb{R}^{n_1} \times \ldots \times \mathbb{R}^{n_m} \to \mathbb{R}$ is given by
$$
f(x^1,\ldots,x^m)= x^\top Q x + q^\top x ,
$$
where $x=[x^{1,\top},\ldots,x^{m,\top}]^\top \in \mathbb{R}^{n}$ with $n = \sum_{i=1}^m n_i$,   $Q \in \mathbb{R}^{n\times n}$ is symmetric and positive definite ($Q=Q^\top\succeq 0$) and $q\in\mathbb{R}^n$. Moreover, the sets $X^i \subseteq \mathbb{R}^{n_i}$, $i=1,\ldots,m$, are non-empty, compact and convex.
\end{assumption}

Note that $Q$ is assumed to be symmetric without loss of generality; in the opposite case it could be split in a symmetric and an antisymmetric part, with the latter giving rise to terms that simplify each other.

\begin{remark}[Problem generalization]

 The considered framework allows for objective functions of the form $f(x^1,\ldots,x^m)= x^\top Q x + q^\top x + \sum_{i=0}^m g^i(x^i)$, where the $g^i(x^i)$ are convex functions that depend on the local decision vectors $x^i$ only, $i = 1,\ldots, m$, and may be useful to encode a utility function for each agent. In this case an epigraphic reformulation can be exploited to bring the cost back to be quadratic, while preserving constraint separability. More precisely, by introducing an additional local variable, say $h^i$, in the decision vector $y^i=[x^{i,\top} \ h^i]^\top$ of agent $i$, the local constraint set can be defined as $Y^i=X^i \cap \{g^i(x^i) \le h^i\}$, while the objective function can be rewritten as $ x^\top Q x + q^\top x + \sum_{i=0}^m h^i$, which is quadratic in $y=[y^{1,\top} \ \dots \ y^{m,\top}]^\top$.
\end{remark}

Under Assumption \ref{ass:Convexity}, given that function $f$ is continuously differentiable and convex and the constraint set $X=X^1\times \dots \times X^m$ is non-empty and compact, by the Weierstrass' theorem (\cite[Proposition A8, p. $625$]{Bertsekas_Tsitsiklis_1997}), $\mathcal{P}$ admits at least one optimal solution. However, $\mathcal{P}$ does not necessarily admit a unique minimizer.

With a slight abuse of notation, for each $i$, $i=1,\ldots,m$, let $f(\cdot,x^{-i}): \mathbb{R}^{n_i} \to \mathbb{R}$ be the objective function in \eqref{eq:Pobj} as a function of the decision vector $x^i$ of agent $i$, when the decision vectors of all other agents are fixed to $x^{-i} \in \mathbb{R}^{n-n_i}$. We will occasionally also write $f(x)$ instead of $f(x^1,\ldots,x^m)$. We will use these notations interchangeably, but the interpretation will always be clear from the context.


\subsection{Regularized Jacobi algorithm} \label{sec:secIIB}

\begin{algorithm}[t]
\caption{Decentralized algorithm}
\begin{algorithmic}[1]
\STATE \textbf{Initialization} \\
\STATE ~~$k=0$. \\
\STATE ~~Consider $x^i_0 \in X^i$, for all $i=1,\ldots,m$. \\
\STATE \textbf{For $i=1,\ldots,m$ repeat until convergence} \\
\STATE ~~Agent $i$ receives $x^{-i}_k$ from central authority. \\
\STATE ~~$x^i_{k+1} = \arg \min_{z^i \in X^i}\!\! \left\{f(z^i,x^{-i}_k) + c \|z^i - x^i_k\|^2\right\} $. \\
\STATE ~~$k \leftarrow k+1$.
\end{algorithmic}
\label{alg:Alg1}
\end{algorithm}

Solving problem $\mathcal{P}$  in a centralized fashion is not always possible since agents may not be willing to share $X^i$, $i=1,\ldots,m$. Moreover, even if this was the case, solving $\mathcal{P}$ in one shot might be computationally challenging. To overcome this and account for information sharing issues, motivated by the particular structure of $\mathcal{P}$ with separable constraint sets, we follow a decentralized, iterative approach as described in Algorithm \ref{alg:Alg1}.

Initially, each agent $i$, $i=1,\ldots,m$, starts with some value $x^i_0 \in X^i$, such that $\big ( x^1_0,\ldots,x^m_0\big )$ is feasible and constitutes an estimate of what the minimizer of $\mathcal{P}$ might be (step 3, Algorithm \ref{alg:Alg1}). At iteration $k+1$, each agent $i$ receives the values of all other agents $x^{-i}_k$ (step 5, Algorithm \ref{alg:Alg1}) from the central authority, and updates the estimate of its own decision vector $x^i$ by solving a local minimization problem (step 6, Algorithm \ref{alg:Alg1}). The performance criterion in this local problem is a linear combination of the objective $f(z^i,x^{-i}_k)$, where the variables of all other agents apart from the $i$-th one are fixed to their values at iteration
$k$, and a quadratic regularization term, penalizing the difference between the decision vector $z^i$ and the value of agent's $i$ own variable at iteration $k$, i.e., $x^i_k$. The relative importance of these two terms is dictated by the regularization coefficient $c \in \mathbb{R}_+$, which plays a key role in determining the convergence properties of  Algorithm \ref{alg:Alg1}.
Note that under Assumption \ref{ass:Convexity}, and due to the presence of the quadratic penalty term, the resulting problem is strictly convex with respect to $z^i$, and hence admits a unique minimizer.

\begin{remark}[Information exchange]
To implement Algorithm \ref{alg:Alg1}, at iteration $k+1$, it is needed that some central authority, or common processing node, collects and broadcasts the current solution of each agent to all others, and that the agents have knowledge of the common objective function $f$ so that each of them can compute $f(\cdot,x^{- i}_k)$ (alternatively the central authority can broadcast it to each agent $i$, $i=1,\ldots,m$).
However, in the case where objective functions that are coupled only through the average of some variables, the central authority needs to broadcast only the average value. Each agent will then be able to compute $f(\cdot,x^{- i}_k)$ by subtracting from the average the value of its local decision vector at iteration $k$, i.e., $x^i_k$. The reader is referred to the case study of Section \ref{sec:secV} for an application that exhibits this structure.
\end{remark}

\section{Main convergence result} \label{sec:secIII}

In this section we analyze Algorithm \ref{alg:Alg1} and show that, for an appropriate choice of the regularization coefficient $c$, the algorithm converges to a minimizer of $\mathcal{P}$.

We start defining some matrices that will be used in the following:
for all $i=1,\ldots,m$, let $Q_{i,i}$ denote the $i$-th block of $Q$, with row and column indices corresponding to $x^i$, where $x = [x^{1,\top}\  \ldots\ x^{m,\top}]^\top$. Denote then by $Q_d$ a block diagonal matrix whose $i$-th block is $Q_{i,i}$, and let $Q_z = Q - Q_d$ denote the off (block) diagonal part of $Q$. Since $Q$ is assumed to be symmetric, $Q_z$ is symmetric as well and its eigenvalues are all real. Since $Q_z$ has zero trace, at least one of its eigenvalues will be non-negative. As a result, $\lambda^{\max}_{Q_z} \geq 0$, where $\lambda^{\max}_{Q_z}$ denotes the maximum eigenvalue of $Q_z$.

We are now in a position to state one of the main results of this paper.
\begin{thm} \label{thm:Alg_conv_quad_NE}
Under Assumption \ref{ass:Convexity}, if $c > \lambda^{\max}_{Q_z}$, then Algorithm \ref{alg:Alg1} converges
to a minimizer of $\mathcal{P}$.
\end{thm}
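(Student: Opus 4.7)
The plan is to recast Algorithm \ref{alg:Alg1} as a fixed-point iteration $x_{k+1}=T(x_k)$, where $T:X\to X$ assembles the $m$ strongly convex block subproblems of step 6, and to analyze it via Fej\'er monotonicity in a norm chosen to match the hypothesis $c>\lambda^{\max}_{Q_z}$. Stacking the first-order optimality conditions of these subproblems and using $Q=Q_d+Q_z$, $x_{k+1}$ satisfies the variational inequality
\begin{equation*}
\langle \nabla f(x_{k+1}) + 2A(x_{k+1}-x_k),\; z-x_{k+1}\rangle \ge 0 \quad \forall z\in X,
\end{equation*}
where $A:=cI-Q_z$. This already yields the fixed-point characterization announced in contribution (1): setting $x_{k+1}=x_k=x^*$ zeroes the $A$-term and leaves $\langle \nabla f(x^*),z-x^*\rangle\ge 0$, which is exactly the first-order condition for $\mathcal{P}$. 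Hence the fixed points of $T$ are precisely the minimizers of $\mathcal{P}$, which exist by Weierstrass.

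Next I would derive Fej\'er monotonicity in the $A$-weighted norm $\|v\|_A^2:=v^\top A v$. Picking any minimizer $x^*$ of $\mathcal{P}$, evaluating the VI above at $z=x^*$, evaluating the optimality VI of $x^*$ at $z=x_{k+1}$, adding the two, and using $\nabla f(x_{k+1})-\nabla f(x^*)=2Q(x_{k+1}-x^*)$ together with $Q\succeq 0$, the gradient terms combine into $-2\|x_{k+1}-x^*\|_Q^2\le 0$, leaving
\begin{equation*}
\langle A(x_{k+1}-x_k),\; x^*-x_{k+1}\rangle \;\ge\; \|x_{k+1}-x^*\|_Q^2 \;\ge\; 0.
\end{equation*}
The hypothesis $c>\lambda^{\max}_{Q_z}$ makes $A\succ 0$, so $\|\cdot\|_A$ is an equivalent norm. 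Plugging the above into the identity $\|u\|_A^2-\|v\|_A^2=(u-v)^\top A(u+v)$ with $u=x_{k+1}-x^*,\;v=x_k-x^*$ produces
\begin{equation*}
\|x_{k+1}-x^*\|_A^2 \;\le\; \|x_k-x^*\|_A^2 - \|x_{k+1}-x_k\|_A^2,
\end{equation*}
which is Fej\'er monotonicity with respect to any minimizer, and by telescoping also gives $\sum_{k\ge 0}\|x_{k+1}-x_k\|_A^2<\infty$, so $\|x_{k+1}-x_k\|\to 0$.

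To close the argument I would combine compactness of $X$ with continuity of $T$. Extract a subsequence $x_{k_n}\to\bar x\in X$; since each block map in step 6 is the minimizer of a strongly convex quadratic on a fixed compact convex set and depends continuously on the parameters (standard Berge-type argument), $T$ is continuous, so $T(x_{k_n})\to T(\bar x)$. But $T(x_{k_n})=x_{k_n+1}$ and $\|x_{k_n+1}-x_{k_n}\|\to 0$, forcing $T(\bar x)=\bar x$; by the first paragraph $\bar x$ is a minimizer of $\mathcal{P}$. Reapplying the Fej\'er inequality with the concrete choice $x^*:=\bar x$, the nonincreasing nonnegative sequence $\|x_k-\bar x\|_A$ has a subsequence tending to $0$ and therefore converges entirely to $0$, giving $x_k\to\bar x$.

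The main obstacle, and the key design choice, is identifying the correct Lyapunov norm in the second step. The iteration mixes $Q_d$ acting on $x_{k+1}$ with $Q_z$ acting on $x_k$, so standard projection-operator non-expansivity arguments in the Euclidean norm do not apply directly; one has to absorb the $Q$-part of the gradient via convexity of $f$ and then recognize that the surviving bilinear form is governed by $A=cI-Q_z$. The hypothesis $c>\lambda^{\max}_{Q_z}$ is precisely what renders $A$ positive definite and simultaneously produces the per-step decrement $\|x_{k+1}-x_k\|_A^2$ that delivers asymptotic regularity; getting the signs and cross-terms right in the addition of the two VIs is the delicate computation, after which the remainder is a standard Fej\'er-plus-continuity closure.
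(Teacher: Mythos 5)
Your proof is correct, and it reaches the paper's threshold $c>\lambda^{\max}_{Q_z}$ through the same underlying metric but by a genuinely different technical route. The paper computes $\widetilde{T}(x)$ in closed form as the $(Q_d+I_c)$-weighted projection of the affine map $\xi(x)=(Q_d+I_c)^{-1}(I_c x-Q_z x-q/2)$, invokes firm non-expansiveness of projection operators to show that $\widetilde{T}$ is firmly non-expansive in the norm induced by $Q_d+I_c-Q$ (which is exactly your $A=cI-Q_z$, since $Q=Q_d+Q_z$), and then cites the Combettes--Pennanen/Opial convergence theory for Picard iterations of firmly non-expansive maps. You instead never form the explicit projection: you stack the block optimality conditions into the variational inequality $\langle \nabla f(x_{k+1})+2A(x_{k+1}-x_k),\,z-x_{k+1}\rangle\ge 0$, pair it with the optimality condition of a minimizer, absorb the $Q$-part via monotonicity of the gradient, and obtain quasi-firm-nonexpansiveness (i.e., the Fej\'er inequality with respect to the solution set only), closing with summability of the increments, compactness of $X$, and continuity of the map. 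The trade-off: the paper's argument establishes the stronger two-point property between arbitrary $x,y$ and can therefore lean on off-the-shelf fixed-point theory, while yours proves only the weaker property relative to fixed points but is entirely self-contained, avoids the matrix inverse and projection calculus, and makes the role of $c>\lambda^{\max}_{Q_z}$ (positive definiteness of $A$, hence equivalence of norms and the per-step decrement $\|x_{k+1}-x_k\|_A^2$) completely explicit. Incidentally, your VI-based closure is structurally closer to the paper's proof of its Theorem 2 for general convex costs than to its proof of this quadratic-case theorem; your fixed-point characterization paragraph reproduces, in compressed form, the content of the paper's Propositions 2--3 and Corollary 1, and the continuity of the block-minimizer map that you assert via a Berge-type argument is justified in the paper by the same reasoning (citing Rockafellar--Wets), so no gap remains there.
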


Theorem \ref{thm:Alg_conv_quad_NE} provides an explicit bound on $c$ that ensures convergence. Such a bound is derived by a fixed-point theoretical approach.
The preliminary results in Section \ref{sec:preliminary} are instrumental to the proof of Theorem \ref{thm:Alg_conv_quad_NE} in Section \ref{sec:proof_main}, and hold for a more general class of objective functions than the quadratic ones in Assumption \ref{ass:Convexity}. Finally, in Section \ref{sec_gradient} we show that Algorithm \ref{alg:Alg1} can be reinterpreted as a step of a scaled projected gradient algorithm, derive a bound on $c$ for convergence to some minimizer based on this reinterpretation, and show that the bound provided in Theorem \ref{thm:Alg_conv_quad_NE} is tighter.

\subsection{Preliminary results} \label{sec:preliminary}

In this section we define appropriate mappings and establish connections between the set of minimizers of the optimization problem with separable constraint $\mathcal{P}$ and the set of fixed-points of those mappings.
Results hold for the class of objective functions specified in Assumption \ref{ass:Convexity_bis}, which includes the quadratic objective functions in Assumption \ref{ass:Convexity}.

\begin{assumption}\label{ass:Convexity_bis}
The function $f: \mathbb{R}^{n_1} \times \ldots \times \mathbb{R}^{n_m} \to \mathbb{R}$ is continuously differentiable, and jointly convex with respect to all arguments. Moreover, the sets $X^i \subseteq \mathbb{R}^{n_i}$, $i=1,\ldots,m$, are non-empty, compact and convex.
\end{assumption}

\subsubsection{Minimizers and fixed-points definitions} \label{sec:secIIIA}

By \eqref{eq:Pobj}-\eqref{eq:Pcon}, the set of minimizers of $\mathcal{P}$ is given by
\begin{align}
M = \arg\min_{\{z^i \in X^i\}_{i=1}^m} f(z^1,\ldots,z^m) \subseteq X. \label{eq:Opt}
\end{align}
Following the discussion below Assumption \ref{ass:Convexity}, $M$ is non-empty. Note that $M$ is not necessarily a singleton; this will be the case if $f$ is jointly strictly convex with respect to its arguments.


For each $i$, $i=1,\ldots,m$, consider the mappings $T^i:X \to X^i$ and $\widetilde{T}^i: X \to X^i$, defined such that, for any $x = (x^1,\ldots,x^m) \in X$,
\begin{align}
T^{i}(x)= &\arg\min_{\zbf^i\in X^i}  \|z^i -x^i\|^2  \label{eq:T2i}\\
&\text{subject to } \nonumber \\
&\ f(z^i,x^{-i}) \le \min_{\zeta^i \in X^i} f(\zeta^i,x^{-i}) \nonumber\\
\widetilde{T}^i(x) =& \arg\min_{z^i \in X^i} \left\{f(z^i,x^{-i}) + c \|z^i - x^i\|^2\right\}. \label{eq:Ttilde}
\end{align}
The mapping in \eqref{eq:T2i} serves as a tie-break rule to select, in case $f(\cdot,x^{-i})$ admits multiple minimizers over $X^i$, the one closer to $x^i$ with respect to the Euclidean norm. Note that, with a slight abuse of notation, in \eqref{eq:T2i} and \eqref{eq:Ttilde} we use equality instead of inclusion since the corresponding minimizers $T^{i}(x)$ and $\widetilde{T}^i(x)$, respectively, are unique.
Note also that with $x_k$ in place of $x$, \eqref{eq:Ttilde} implies that the update step 6 in Algorithm \ref{alg:Alg1} can be equivalently represented by $x^i_{k+1} = \widetilde{T}^i(x_k)$.

Define also the mappings $T: X \to X$ and $\widetilde{T}: X \to X$, such that their components are given by $T^i$ and $\widetilde{T}^i$, respectively, for $i=1,\ldots,m$, i.e., $T= \big ( T^1,\ldots,T^m \big)$ and $\widetilde{T} = \big ( \widetilde{T}^1,\ldots,\widetilde{T}^m \big)$. The mappings $T$ and $\widetilde{T}$ can be equivalently written as
\begin{align}
T(x)=& \arg \min_{\zbf \in X} \sum_{i=1}^m \|z^i - x^i\|^2 \label{eq:T2} \\&\text{subject to} \nonumber \\ &  f(z^i,x^{-i}) \le \min_{\zeta^i \in X^i} f(\zeta^i,x^{-i}), ~\forall i=1,\ldots,m \nonumber\\
\widetilde{T}(x)=&\arg\min_{\zbf \in X} \sum_{i=1}^m \left\{f(z^i,x^{- i}) + c\|z^i-x^i\|^2\right\},\label{eq:Tmapsum}
\end{align}
where the terms inside the summation in \eqref{eq:T2} and \eqref{eq:Tmapsum} are decoupled.
The set of fixed-points of $T$ and $\widetilde{T}$ is, respectively, given by
\begin{align}
F_T &= \big \{ x \in X:~ x=T(x)\big \}, \\
F_{\widetilde{T}} &= \big \{ x \in X:~ x=\widetilde{T}(x)\big \},
\end{align}
and can be equivalently characterized in terms of the individual components of $T$ and $\widetilde{T}$, as
\begin{align}
F_T &= \big \{ x \in X:~ x^i=T^i(x), \text{ for all } i=1,\ldots,m \big \}, \label{eq:FP_T} \\
F_{\widetilde{T}} &= \big \{ x \in X:~ x^i=\widetilde{T}^i(x), \text{ for all } i=1,\ldots,m \big \}, \label{eq:FP_Ttilde}
\end{align}
to facilitate the subsequent derivations.

\subsubsection{Connections between minimizers and fixed-points} \label{sec:secIIIB}
We report here a fundamental optimality result, that we will often use in the sequel.

\begin{proposition}[{\cite[Proposition 3.1]{Bertsekas_Tsitsiklis_1997}}] \label{prop:opt_VI} Under Assumption \ref{ass:Convexity_bis},
\begin{enumerate}
\item if $x \in X$ minimizes $f$ over $X$, then $(z-x)^\top \nabla f(x) \geq 0$, for all $z \in X$.
\item if $f$ is also convex on $X$, then the condition of the previous part is also sufficient for $x$ to minimize $f$ over $X$, i.e., $x \in \arg \min_{z \in X} f(z)$.
\end{enumerate}
\end{proposition}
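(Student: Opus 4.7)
The plan is to prove the two parts using the feasible-direction argument for necessity and the gradient inequality characterization of convex functions for sufficiency; this is a classical variational inequality characterization of optimality over a convex set.

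For part (1), I would exploit convexity of $X$ to construct admissible line segments emanating from $x$. Specifically, fix an arbitrary $z \in X$ and, since $X$ is convex, note that $x_t := x + t(z-x) \in X$ for every $t \in [0,1]$. Define the scalar function $\phi(t) := f(x_t)$ on $[0,1]$. Since $x$ is a minimizer of $f$ over $X$, $\phi(t) \geq \phi(0)$ for all $t \in [0,1]$, so the right derivative of $\phi$ at $0$ must be nonnegative. Because $f$ is continuously differentiable by Assumption \ref{ass:Convexity_bis}, the chain rule gives $\phi'(0) = \nabla f(x)^\top (z-x)$, yielding the desired inequality $(z-x)^\top \nabla f(x) \geq 0$. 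Since $z \in X$ was arbitrary, the claim follows.

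For part (2), I would invoke the standard first-order characterization of convex differentiable functions: under Assumption \ref{ass:Convexity_bis}, for any $z \in X$,
\begin{equation*}
f(z) \geq f(x) + \nabla f(x)^\top (z-x).
\end{equation*}
Combining this with the hypothesis $(z-x)^\top \nabla f(x) \geq 0$ for every $z \in X$ immediately gives $f(z) \geq f(x)$ for all $z \in X$, so $x \in \arg\min_{z \in X} f(z)$.

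I do not anticipate a significant obstacle, as both directions reduce to one-line calculations once the convexity structure is unpacked. The only subtle point worth flagging is that Assumption \ref{ass:Convexity_bis} already embeds joint convexity of $f$, so the extra qualifier ``if $f$ is also convex on $X$'' in part (2) is automatically satisfied in our setting; this is consistent with the statement as cited from \cite[Proposition 3.1]{Bertsekas_Tsitsiklis_1997}, where the necessity part does not itself require convexity of $f$ and is stated at that level of generality.
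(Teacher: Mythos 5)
Your proof is correct and is exactly the classical argument: the paper itself gives no proof of this proposition, citing it directly from \cite[Proposition 3.1]{Bertsekas_Tsitsiklis_1997}, and the feasible-direction/directional-derivative argument for necessity together with the first-order convexity inequality for sufficiency is precisely the proof given in that reference. Your closing remark is also accurate: under Assumption \ref{ass:Convexity_bis} the convexity hypothesis of part (2) is automatic, and the paper keeps the qualifier only to preserve the statement as it appears in the cited source.
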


We start by showing that the set of minimizers $M$ of $\mathcal{P}$ in \eqref{eq:Opt} and the set of fixed-points $F_T$ of the mapping $T$ in \eqref{eq:T2} coincide. This is summarized in the following proposition.
\begin{proposition}\label{prop:Opt_FT}
Under Assumption \ref{ass:Convexity_bis}, $M = F_T$.
\end{proposition}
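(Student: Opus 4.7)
The plan is to prove the two inclusions $M\subseteq F_T$ and $F_T\subseteq M$ separately, with the first-order optimality condition (Proposition \ref{prop:opt_VI}) doing the main work for the second inclusion.

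First I would unpack what it means for $x$ to lie in $F_T$. By the definition in \eqref{eq:T2i}, $T^i(x)$ is the point in $\arg\min_{\zeta^i\in X^i}f(\zeta^i,x^{-i})$ that is closest to $x^i$. Hence the fixed-point equation $x^i = T^i(x)$ is equivalent to saying that $x^i$ itself belongs to $\arg\min_{\zeta^i\in X^i}f(\zeta^i,x^{-i})$, since in that case $\|x^i-x^i\|^2=0$ is trivially the minimum distance. Using \eqref{eq:FP_T}, this gives the working characterization
\begin{equation*}
F_T = \bigl\{x\in X:\ x^i\in\arg\min_{z^i\in X^i}f(z^i,x^{-i}),\ i=1,\dots,m\bigr\}.
\end{equation*}

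For $M\subseteq F_T$, I would argue by partial minimization: if $x^\star\in M$ minimizes $f$ over $X=X^1\times\cdots\times X^m$, then for every $i$ and every $z^i\in X^i$ the point $(z^i,x^{\star,-i})$ is feasible, so $f(x^{\star,i},x^{\star,-i})\leq f(z^i,x^{\star,-i})$. Hence $x^{\star,i}\in\arg\min_{z^i\in X^i}f(z^i,x^{\star,-i})$ for each $i$, and by the characterization above $x^\star\in F_T$.

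The more interesting direction is $F_T\subseteq M$. Given $x\in F_T$, the coordinate-wise optimality combined with part 1 of Proposition \ref{prop:opt_VI} applied on $X^i$ yields, for every $i$,
\begin{equation*}
(z^i-x^i)^\top\nabla_{x^i}f(x)\geq 0\quad\text{for all }z^i\in X^i,
\end{equation*}
where $\nabla_{x^i}f(x)$ denotes the block of $\nabla f(x)$ corresponding to agent $i$. Now, for any $z=(z^1,\dots,z^m)\in X$ I would sum these block inequalities to obtain
\begin{equation*}
(z-x)^\top\nabla f(x)=\sum_{i=1}^m(z^i-x^i)^\top\nabla_{x^i}f(x)\geq 0.
\end{equation*}
Since $f$ is convex and $X$ is convex under Assumption \ref{ass:Convexity_bis}, part 2 of Proposition \ref{prop:opt_VI} turns this variational inequality into the statement $x\in\arg\min_{z\in X}f(z)=M$, completing the proof.

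The only subtlety worth flagging is the tie-break rule in the definition of $T^i$: one must check that the distance-minimization layer does not create spurious fixed-point conditions. This is handled by the observation above that a fixed point of $T^i$ is precisely a coordinate-wise minimizer of $f(\cdot,x^{-i})$ on $X^i$, since the distance objective vanishes at $z^i=x^i$. After this, the argument is a clean block-decomposition of the gradient together with Proposition \ref{prop:opt_VI}; no further tools are required.
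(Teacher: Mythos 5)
Your proof is correct and follows essentially the same route as the paper's: the inclusion $M\subseteq F_T$ via partial minimization, and $F_T\subseteq M$ by applying the first-order optimality condition of Proposition \ref{prop:opt_VI} blockwise, summing over $i$, and invoking joint convexity. Your explicit characterization of $F_T$ as the set of coordinate-wise minimizers (and the remark that the tie-break layer is harmless at a fixed point) is a slightly cleaner packaging of what the paper does implicitly, but the substance is identical.
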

\begin{proof}
1) $M \subseteq F_T$: Fix any $x \in M$. For each $i=1,\ldots,m$, denote $x$ by $(x^i,x^{-i})$. The fact that $x \in M$ implies that
$(x^i,x^{-i})$ is a minimizer of $f$, for all $i=1,\ldots,m$. As a result, $f(x^i,x^{-i})$ will be no greater than the values that $f$ may take over $X$, and hence also for any values it may take if evaluated at $(\zeta^i,x^{-i})$, for any $\zeta^i \in X^i$, i.e.,
$f(x^i,x^{-i}) \leq f(\zeta^i,x^{-i})$, for all $\zeta^i \in X^i$.
The last statement can be equivalently written as
\begin{align}
f(x^i,x^{-i}) \leq \min_{\zeta^i \in X^i} f(\zeta^i,x^{-i}), \label{eq:OptNash_pf2}
\end{align}
which means that $x$ satisfies the inequality in \eqref{eq:T2}. Moreover $x$ is also optimal for the objective function in \eqref{eq:T2}, since it results in zero cost. Hence, by \eqref{eq:T2}, $x$ is a fixed-point of $T$, which, by \eqref{eq:FP_T}, implies that $x \in F_T$, thus concluding the first part of the proof.\\
2) $F_T \subseteq M$: Fix any $x \in F_T$. By the definition of $F_T$, and due to the inequality in \eqref{eq:T2} that is embedded in the definition of $T$, we have that for all $i=1,\ldots,m$, $f(x^i,x^{-i}) \leq \min_{\zeta^i \in X^i} f(\zeta^i,x^{-i})$. The last statement implies that, for all $i=1,\ldots,m$, $x^i$ is the minimizer of $f(\cdot,x^{-i})$ over $X^i$. For all $i=1,\ldots,m$, by the first part of Proposition \ref{prop:opt_VI} (with $f(\cdot,x^{-i})$ in place of $f$) we then have that
\begin{align}
(z^i-x^i)^\top \nabla^i f(x^i,x^{-i}) \geq 0, \text{ for all } z^i \in X^i, \label{eq:OptNash_pf3}
\end{align}
where $\nabla^i f(x^i,x^{-i})$ is the $i$-th component of the gradient $\nabla f(\cdot,x^{-i})$ of $f(\cdot,x^{-i})$, evaluated at $x^i$.
By \eqref{eq:OptNash_pf3}, we then have that $\sum_{i=1}^m (z^i-x^i)^\top \nabla^i f(x^i,x^{-i}) \geq 0$ for all $z^i \in X^i$, $i=1,\ldots,m$, which, by setting $x = (x^1,\ldots,x^m)$, $z = (z^1,\ldots,z^m)$, can be written as $(z-x)^\top \nabla f(x) \geq 0$, for all $z \in X$. By the second part of Proposition \ref{prop:opt_VI}, and since $f$ is jointly convex with respect to all elements of $x$, the last statement implies that $x$ is a minimizer of $f$ over $X$, i.e., $x \in M$, thus concluding the second part of the proof.
\end{proof}

Note that the connection between minimizers, fixed-points and variational inequalities, similar to the ones that appear in the proof of Proposition \ref{prop:Opt_FT} (e.g., see \eqref{eq:OptNash_pf3}), has been also investigated in \cite{Facchinei_etal_2007}, in the context of Nash equilibria in non-cooperative games.

We next show that the set of fixed-points $F_T$ of $T$ and the set of fixed-points $F_{\widetilde{T}}$ of $\widetilde{T}$ coincide. This is summarized in the following proposition.

\begin{proposition}\label{prop:FP_T_TT}
Under Assumption \ref{ass:Convexity_bis}, $F_T =  F_{\widetilde{T}}$.
\end{proposition}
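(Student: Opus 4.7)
The plan is to prove the two set inclusions separately, exploiting the fact that the quadratic regularization term $c\|z^i - x^i\|^2$ attains its minimum, namely zero, precisely at $z^i = x^i$, so its gradient vanishes at that point. This is what decouples the regularized first-order condition from the penalty.

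For the inclusion $F_T \subseteq F_{\widetilde{T}}$, I would take any $x \in F_T$, so that $x^i = T^i(x)$ for every $i$. By the feasibility constraint embedded in the definition \eqref{eq:T2i}, this forces $x^i$ to be a minimizer of $f(\cdot, x^{-i})$ over $X^i$. Then for any $z^i \in X^i$ one has $f(z^i, x^{-i}) + c\|z^i - x^i\|^2 \geq f(x^i, x^{-i}) + 0 = f(x^i, x^{-i}) + c\|x^i - x^i\|^2$, so $x^i$ minimizes the strictly convex regularized objective; by uniqueness of its minimizer, $x^i = \widetilde{T}^i(x)$, hence $x \in F_{\widetilde{T}}$.

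For the reverse inclusion $F_{\widetilde{T}} \subseteq F_T$, I would take $x \in F_{\widetilde{T}}$, so that for each $i$ the point $x^i$ minimizes $z^i \mapsto f(z^i, x^{-i}) + c\|z^i - x^i\|^2$ over $X^i$. Applying the necessary part of Proposition \ref{prop:opt_VI} to this (strictly convex, continuously differentiable) objective yields
\[
(z^i - x^i)^\top\!\bigl[\nabla^i f(x^i, x^{-i}) + 2c(x^i - x^i)\bigr] \geq 0, \qquad \forall\, z^i \in X^i.
\]
The regularization contribution $2c(x^i - x^i)$ vanishes, leaving the variational inequality $(z^i - x^i)^\top \nabla^i f(x^i, x^{-i}) \geq 0$ for all $z^i \in X^i$. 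Since joint convexity of $f$ in Assumption \ref{ass:Convexity_bis} implies convexity of $f(\cdot, x^{-i})$, the sufficient part of Proposition \ref{prop:opt_VI} gives that $x^i$ minimizes $f(\cdot, x^{-i})$ over $X^i$. Therefore $x^i$ is feasible for the inner problem defining $T^i(x)$, and since $\|x^i - x^i\|^2 = 0$, it is also the unique minimizer of the tie-breaking objective, i.e., $x^i = T^i(x)$, which gives $x \in F_T$.

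I do not anticipate a serious obstacle here; the argument is essentially a careful accounting of the first-order optimality conditions. The only subtle point to handle cleanly is in the second direction: one must remember to invoke Proposition \ref{prop:opt_VI} twice — first in its necessary form applied to the regularized objective (to extract the variational inequality), and then in its sufficient form applied to the un-regularized $f(\cdot, x^{-i})$, which requires flagging that joint convexity of $f$ passes to convexity in the $i$-th block with the other blocks fixed.
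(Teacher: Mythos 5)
Your proof is correct, and one half of it takes a genuinely simpler route than the paper. The inclusion $F_{\widetilde{T}} \subseteq F_T$ matches the paper's argument essentially step for step: apply the necessary part of Proposition \ref{prop:opt_VI} to the regularized objective, observe that the penalty gradient $2c(z^i-x^i)$ vanishes at $z^i=x^i$ so the variational inequality passes to $f(\cdot,x^{-i})$, then use the sufficient part together with convexity, and finally note that $x^i$ is feasible for the problem defining $T^i(x)$ and attains the value zero of the tie-break objective. Where you differ is the inclusion $F_T \subseteq F_{\widetilde{T}}$: the paper runs the same variational-inequality machinery in reverse (extract $(z^i-x^i)^\top \nabla^i f(x^i,x^{-i}) \ge 0$ from minimality of $x^i$, transfer it to $f_c(z^i,x)=f(z^i,x^{-i})+c\|z^i-x^i\|^2$ via the vanishing penalty gradient, then invoke sufficiency for the strictly convex $f_c$), whereas you simply compare objective values, $f(z^i,x^{-i})+c\|z^i-x^i\|^2 \ge f(z^i,x^{-i}) \ge f(x^i,x^{-i})$, with equality at $z^i=x^i$, and conclude by uniqueness of the minimizer of the strictly convex regularized problem. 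Your version is more elementary, needing neither differentiability nor Proposition \ref{prop:opt_VI} for that direction, and it makes transparent why adding a nonnegative penalty that vanishes at an existing minimizer cannot move the minimizer; the paper's symmetric treatment buys nothing extra for this proposition. No gaps.
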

\begin{proof}
1) $F_T \subseteq F_{\widetilde{T}}$: Fix any $x \in F_T$. By \eqref{eq:FP_T}, this is equivalent to the fact that $x^i = T^i(x)$, for all $i=1,\ldots,m$, which, due to the definition of $T$ implies that, for all $i=1,\ldots,m$,
\begin{align}
f(x^i,x^{-i}) \le \min_{\zeta^i \in X^i} f(\zeta^i,x^{-i}). \label{eq:FP_FP_pf1-2}
\end{align}
This implies that $x^i$ minimizes $f(\cdot,x^{-i})$ over $X^i$, hence, by the first part of Proposition \ref{prop:opt_VI} (with $f(\cdot,x^{-i})$ in place of $f$) we have that
\begin{align}
(z^i-x^i)^\top \nabla^i f(x^i,x^{-i}) \geq 0, \text{ for all } z^i \in X^i, \label{eq:FP_FP_pf2-2}
\end{align}
where $\nabla^i f(x^i,x^{-i})$ is the $i$-th component of the gradient $\nabla f(\cdot,x^{-i})$ of $f(\cdot,x^{-i})$, evaluated at $x^i$.

Let $f_c(z^i,x) = f(z^i,x^{-i}) + c \|z^i-x^i\|^2$, for all $z^i$, $i=1,\ldots,m$, and notice that $\nabla f_c(x^i,x) = \nabla f(x^i,x^{-i})$, where $\nabla f_c(x^i,x)$ is the gradient of $f_c(\cdot,x)$, evaluated at $x^i$. The latter is due to the fact that the gradient of the quadratic penalty term vanishes at $x^i$. By \eqref{eq:FP_FP_pf2-2}, we then have that, for all $i=1,\ldots,m$,
\begin{align}
(z^i-x^i)^\top \nabla^i f_c(x^i,x) \geq 0, \text{ for all } z^i \in X^i. \label{eq:FP_FP_pf3-2}
\end{align}
Since $f_c(\cdot,x)$ is strictly convex with respect to its first argument, by the second part of Proposition \ref{prop:opt_VI} (with $f_c(\cdot,x)$ in place of $f$), \eqref{eq:FP_FP_pf3-2} implies that, for all $i=1,\ldots,m$, $x^i$ is the unique minimizer of $f_c(\cdot,x)$ over $X^i$, i.e.,
\begin{align}
x^i = \arg \min_{z^i \in X^i} f(z^i,x^{-i}) + c \|z^i-x^i\|^2. \label{eq:FP_FP_pf4-2}
\end{align}
By \eqref{eq:Ttilde}, \eqref{eq:FP_FP_pf4-2} is equivalent to $x^i = \widetilde{T}^i(x)$, for all $i=1,\ldots,m$, thus concluding the first part of the proof.\\
2) $F_{\widetilde{T}} \subseteq F_T$: Fix any $x \in F_{\widetilde{T}}$. By \eqref{eq:FP_Ttilde} this is equivalent to the fact that $x^i = \widetilde{T}^i(u)$, for all $i=1,\ldots,m$, which, by the definition of $\widetilde{T}^i$ in \eqref{eq:Ttilde}, implies that, for all $i=1,\ldots,m$,
\begin{align}
x^i = \arg \min_{z^i \in X^i} f(z^i,x^{-i}) + c \|z^i-x^i\|^2. \label{eq:FP_Nash_pf1-2}
\end{align}
Let again $f_c(z^i,x) = f(z^i,x^{-i}) + c \|z^i-x^i\|^2$. Equation \eqref{eq:FP_Nash_pf1-2} implies then that, for all $i=1,\ldots,m$, $x^i$ minimizes $f_c(\cdot,x)$ over $X^i$, and by the first part of Proposition \ref{prop:opt_VI} (with $f_c(\cdot,x)$ in place of $f$) leads to
\begin{align}
(z^i-x^i)^\top \nabla^i f_c(x^i,x) \geq 0, \text{ for all } z^i \in X^i, \label{eq:FP_Nash_pf2-2}
\end{align}
where $\nabla^i f_c(x^i,x)$ is the $i$-th component of the gradient $\nabla f_c(\cdot,x)$ of $f_c(\cdot,x)$, evaluated at $x^i$.

Notice that $\nabla f_c(x^i,x) = \nabla f(x^i,x^{-i})$, where $\nabla f(x^i,x^{-i})$ is the gradient of $f(\cdot,x^{-i})$, evaluated at $x^i$, since the gradient of $\|z^i-x^i\|^2$ with respect to $z^i$ vanishes at $x^i$. Therefore, for all $i=1,\ldots,m$, \eqref{eq:FP_Nash_pf2-2} leads to
\begin{align}
(z^i-x^i)^\top \nabla^i f(x^i,x^{-i}) \geq 0, \text{ for all } z^i \in X^i, \label{eq:FP_Nash_pf3-2}
\end{align}
Since $f(\cdot,x^{-i})$ is convex with respect to its first argument, by the second part of Proposition \ref{prop:opt_VI}, \eqref{eq:FP_Nash_pf3-2} implies that $x^i$ minimizes $f(\cdot,x^{-i})$ over $X^i$. In other words, $x^i \in \arg \min_{z^i \in X^i} f(z^i,x^{-i})$, for all $i=1,\ldots,m$. This in turn implies that, for all $i=1,\ldots,m$,
\begin{align}
f(x^i,x^{-i}) &\leq f(z^i,x^{-i}), \text{ for all } z^i \in X^i, \nonumber \\
\Leftrightarrow ~f(x^i,x^{-i}) &\leq \min_{z^i \in X^i} f(z^i,x^{-i}).
\end{align}
The last inequality shows that $u$ satisfies the inequality in \eqref{eq:T2} in the definition of $T$. Moreover, it minimizes the objective function in \eqref{eq:T2}, since it results in zero cost. Therefore, we have $x=T(x)$, thus concluding the second part of the proof.
\end{proof}

A direct consequence of Propositions \ref{prop:Opt_FT} and \ref{prop:FP_T_TT} is that the set of minimizers $M$ of $\mathcal{P}$ coincides with the set of fixed-points of the mapping $\widetilde{T}$. This is summarized in the following corollary.
\begin{cor} \label{cor:M_FP_Ttilde}
Under Assumption \ref{ass:Convexity_bis}, $M=  F_{\widetilde{T}}$.
\end{cor}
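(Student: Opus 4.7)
The plan is essentially a one-line chaining argument, since the corollary is immediate from the two propositions that precede it. I would simply invoke Proposition~\ref{prop:Opt_FT}, which under Assumption~\ref{ass:Convexity_bis} gives $M = F_T$, and then Proposition~\ref{prop:FP_T_TT}, which under the same assumption gives $F_T = F_{\widetilde{T}}$. Transitivity of set equality then yields $M = F_{\widetilde{T}}$.

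Since there is no real obstacle here, the only thing worth being careful about is to verify that the hypotheses required by both propositions are the same ones the corollary assumes, namely Assumption~\ref{ass:Convexity_bis} (continuous differentiability and joint convexity of $f$, plus nonempty, compact, convex local constraint sets). Both propositions are stated under exactly this assumption, so no additional condition needs to be imposed and no auxiliary lemma needs to be reproved. In particular, I do not need to re-derive the variational-inequality characterization used inside the proofs of Propositions~\ref{prop:Opt_FT} and \ref{prop:FP_T_TT}; those are already established.

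If I wanted to give a slightly more self-contained presentation, I could also record the interpretation that this corollary licenses: the fixed-points of the regularized Jacobi update $\widetilde{T}$ are exactly the minimizers of $\mathcal{P}$, so Algorithm~\ref{alg:Alg1} has the right stationary set regardless of the choice of the regularization coefficient $c > 0$ (the role of $c$ will only be to ensure actual convergence to one such fixed-point, which is the content of Theorem~\ref{thm:Alg_conv_quad_NE}). But for the proof itself, a two-sentence argument citing Propositions~\ref{prop:Opt_FT} and \ref{prop:FP_T_TT} is all that is needed.
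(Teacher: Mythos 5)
Your proposal is correct and matches the paper exactly: the corollary is stated there as a direct consequence of Propositions~\ref{prop:Opt_FT} and \ref{prop:FP_T_TT}, i.e., chaining $M = F_T = F_{\widetilde{T}}$ under Assumption~\ref{ass:Convexity_bis}. No gaps.
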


\subsection{Proof of Theorem \ref{thm:Alg_conv_quad_NE}} \label{sec:proof_main}

Step 6 of Algorithm \ref{alg:Alg1} can be equivalently written as $x^i_{k+1} = \widetilde{T}^i (x_{k})$, which entails that
$x_{k+1} = \widetilde{T} (x_{k})$, i.e., one iteration of Algorithm \ref{alg:Alg1} for the multi-agent system corresponds to a Picard-Banach iteration of the mapping $\widetilde{T}$ (see \cite{Berinde} (Chapter 1.2) for a definition).

Since the set of fixed-points of $\widetilde{T}$ is non-empty (it coincides with $M$ due to Corollary \ref{cor:M_FP_Ttilde}), we just need to prove that $\widetilde{T}$ is firmly non-expansive (see \cite{Combettes_Pennanen_2002} (Section 1) for a definition in general Hilbert spaces). If that is the case, then, by the results of \cite{Combettes_Pennanen_2002,Opial_1967}, we have that the Picard-Banach iteration converges to a fixed-point of $\widetilde{T}$, for any initial condition $x^i_0 \in X^i$, $i=1,\ldots,m$. By Corollary \ref{cor:M_FP_Ttilde} this fixed-point will also be a minimizer of $\mathcal{P}$.

We next show that if $c > \lambda^{\max}_{Q_z}$, then, the mapping $\widetilde{T}(\cdot)$ is indeed firmly non-expansive  with respect to $\| \cdot \|_{Q_d + I_c - Q}$, i.e.,
\begin{align}
\| \widetilde{T}(x) &- \widetilde{T}(y)\|^2_{Q_d + I_c - Q}  \nonumber \\ &\leq (x-y)^\top (Q_d + I_c - Q) (\widetilde{T}(x) - \widetilde{T}(y)), \label{eq:firmlyNE}
\end{align}
thus concluding the proof of Theorem \ref{thm:Alg_conv_quad_NE}.

Under Assumption \ref{ass:Convexity}, the mapping $\widetilde{T}$ in \eqref{eq:Tmapsum} is given by
\begin{align}
\widetilde{T}(x)&=\arg\min_{\zbf \in X} \sum_{i=1}^m f(z^i,x^{- i}) + c\|z^i-x^i\|^2 \nonumber\\
&=\arg\min_{\zbf \in X} \sum_{i=1}^m (z^i)^\top(Q_{i,i} + I_c)z^i \nonumber \\ &~~~~~~~~~~~~~+ (2(x^{- i})^\top Q_{- i,i} -2(x^i)^\top I_c +q_i^\top)z^i  \nonumber \\
&=\arg\min_{\zbf \in X} z^\top (Q_d+I_c) z + (2x^\top Q_z -2x^\top I_c +q^\top)z. \label{eq:map_T_tilde_quad}
\end{align}
Notice the slight abuse of notation in \eqref{eq:map_T_tilde_quad}, where the weighted identity matrix $I_c$ in the second and the third equality are not of the same dimension.
Let $\csbf(x)=(Q_d+I_c)^{-1}(I_cx-Q_{z}x-q/2)$ denote the unconstrained minimizer of \eqref{eq:map_T_tilde_quad}. We then have that
\begin{align}
\widetilde{T}(x)&=\arg\min_{\zbf \in X} (z -\csbf(x))^\top(Q_{d} + I_c)(z -\csbf(x)) \nonumber \\ &=\left[ \csbf(x) \right]_{Q_d+I_c}^{X}, \label{eq:TmapProj}
\end{align}
where $\left[ \csbf(x) \right]_{Q_d+I_c}^{X}$ denotes the projection, with respect to $||\cdot||_{Q_d + I_c}$, of $\csbf(x)$ on $X$.
Note that $Q_d+I_c$ is always positive definite for $c \in \mathbb{R}_+$, so that its inverse exists, and the projection is well defined.

We have that
\begin{align}
&\|\widetilde{T}(x)-\widetilde{T}(y)\|_{Q_d+I_c}^2 \nonumber\\
& =\|\left[ \csbf(x) \right]_{Q_d+I_c}^{X} - \left[ \csbf(y) \right]_{Q_d+I_c}^{X}\|_{Q_d+I_c}^2\nonumber\\
& \le(\csbf(x) - \csbf(y))^\top(Q_d+I_c)(\left[ \csbf(x) \right]_{Q_d+I_c}^{X} - \left[ \csbf(y) \right]_{Q_d+I_c}^{X}) \nonumber \\
& =(x-y)^\top(I-Q(Q_d+I_c)^{-1})(Q_d+I_c) \nonumber \\
& ~~~~~~~~~~~\times (\left[ \csbf(x) \right]_{Q_d+I_c}^{X} - \left[ \csbf(y) \right]_{Q_d+I_c}^{X})\nonumber\\
& =(x-y)^\top(Q_d+I_c-Q)(\left[ \csbf(x) \right]_{Q_d+I_c}^{X} - \left[ \csbf(y) \right]_{Q_d+I_c}^{X}), \label{eq:condFNE1}
\end{align}
where the first inequality follows from the definition of a firmly non-expansive mapping and the fact that any projection mapping is firmly non-expansive (see Proposition 4.8 in \cite{combettes2010}).
The second equality is due to the definition $\csbf$, and the last one follows after performing the matrix multiplication.

Since $Q \succeq 0$, then $\|\widetilde{T}(x)-\widetilde{T}(y)\|_{Q_d+I_c-Q}^2 \leq \|\widetilde{T}(x)-\widetilde{T}(y)\|_{Q_d+I_c}^2$. This, together with \eqref{eq:condFNE1}, implies that
\begin{align}
&\|\left[ \csbf(x) \right]_{Q_d+I_c}^{X} - \left[ \csbf(y) \right]_{Q_d+I_c}^{X}\|_{Q_d+I_c-Q}^2\nonumber\\
&\le (x-y)^\top(Q_d+I_c-Q)(\left[ \csbf(x) \right]_{Q_d+I_c}^{X} - \left[ \csbf(y) \right]_{Q_d+I_c}^{X}). \label{eq:condFNE2}
\end{align}
By the definition of a firmly non-expansive mapping \cite{combettes2010}, \eqref{eq:condFNE2} implies that, if $Q_d+I_c-Q\succ0$,
$\widetilde{T}$ is firmly non-expansive with respect to $||\cdot||_{Q_d + I_c -Q}$. The condition $Q_d+I_c-Q\succ0$ can be satisfied by choosing $c > \lambda^{\max}_{Q_z}$.

\subsection{Connection with gradient algorithms}\label{sec_gradient}

%


Recalling the formulation in \eqref{eq:map_T_tilde_quad} and \eqref{eq:TmapProj},  
$x^i_{k+1} = \widetilde{T}^i (x_{k})$, $i=1,\ldots,m$, in step 6 of Algorithm \ref{alg:Alg1} can be equivalently written as a scaled projected gradient step as follows:
\begin{align}
&x_{k+1}=\left[\xi(x_k)\right]^X_{Q_d+I_c} \nonumber \\
&~~=\left[(Q_d+I_c)^{-1}(Q_d+I_c-Q)x_k - (Q_d+I_c)^{-1}\dfrac{q}{2}\right]^X_{Q_d+I_c} \nonumber \\
&~~=\left[x_k - (Q_d+I_c)^{-1}(Qx_k+\dfrac{q}{2})\right]^X_{Q_d+I_c} \nonumber \\
&~~=\left[x_k - \dfrac{1}{2c}(\dfrac{Q_d}{c}+I)^{-1}(2Qx_k+q)\right]^X_{\frac{Q_d}{c}+I} \label{eq:scaled_grad}
\end{align}
where, the first equality follows recalling the definition of $\xi(x)$ and of $Q_z$, and the last equality is obtained scaling by $c$. As one can see the gradient $2Qx+q$ of the original cost appears from the definition of $\xi(x)$, $1/(2 c)$ plays the role of the gradient step-size, and $(\dfrac{Q_d}{c} + I)$ is the scaling matrix (see  \cite[Section 3.3.3]{Bertsekas_Tsitsiklis_1997}).

Notice that $Q_d$ is symmetric with $Q_d \succeq 0$, as a result of $Q$ having the same property. Therefore, for any $c > 0$, the scaling matrix $(\dfrac{Q_d}{c} + I)$ satisfies the positivity condition
\begin{align}
(x-y)^\top (\dfrac{Q_d}{c} + I) (x-y) \geq \|x-y\|_2^2, \text{ for all } x, y \in X. \label{eq:scaled_grad_pos}
\end{align}
Under \eqref{eq:scaled_grad_pos}, by Proposition 3.7, p. 217 of \cite{Bertsekas_Tsitsiklis_1997} we have that the scaled projected gradient iteration \eqref{eq:scaled_grad} converges to some minimizer of $\mathcal{P}$ for a sufficiently small step-size $1/(2 c)$. This step-size is, however, not quantified in \cite{Bertsekas_Tsitsiklis_1997}; we perform this in the sequel since it provides the means to compare our methodology with a scaled projected gradient algorithm.

We can write $x_{k+1}$ in \eqref{eq:scaled_grad} as the unique solution to the following quadratic minimization program:
\begin{align}
x_{k+1} = \arg \min_{z \in X} c (z - x_k)^\top &(\dfrac{Q_d}{c} + I) (z - x_k) \nonumber \\ &+ c (z - x_k)^\top (2Qx_k+q).
\end{align}
By optimality of $x_{k+1}$, and since $z=x_k \in X$ results in  zero objective value, we have that
\begin{align}
& c (x_{k+1} - x_k)^\top (\dfrac{Q_d}{c} + I) (x_{k+1} - x_k) \nonumber \\ &~~~~~~~+ (x_{k+1} - x_k)^\top (2Qx_k+q) \leq 0, \nonumber \\
\Leftrightarrow ~~& (x_{k+1} - x_k)^\top (2Qx_k+q) \nonumber \\ &~~~~~~~\leq - c (x_{k+1} - x_k)^\top (\dfrac{Q_d}{c} + I) (x_{k+1} - x_k). \label{eq:scaled_opt}
\end{align}
By \eqref{eq:scaled_grad_pos} and \eqref{eq:scaled_opt} (notice that $x_{k+1}, x_k \in X$), we thus have that
\begin{align}
(x_{k+1} - x_k)^\top (2Qx_k+q) \leq - c \|x_{k+1} - x_k\|_2^2. \label{eq:scaled_opt1}
\end{align}

By the Descent Lemma (Lemma 2.1 in \cite{Bertsekas_Tsitsiklis_1997}), for the quadratic objective function of Assumption \ref{ass:Convexity} we obtain that
\begin{align}
f(x_{k+1}) \leq f(x_k) &+ (x_{k+1} - x_k)^\top (2Q x_k + q) \nonumber \\ &+ \lambda_{Q}^{\max} \|x_{k+1} - x_k\|_2^2, \label{eq:descent_lemma}
\end{align}
where $\lambda_{Q}^{\max}$ denotes the maximum eigenvalue of $Q$, which equals half of the Lipschitz constant of the gradient of $f$.
By \eqref{eq:descent_lemma} and \eqref{eq:scaled_opt1} we then have that
\begin{align}
f(x_{k+1}) \leq f(x_k) - (c - \lambda_{Q}^{\max}) \|x_{k+1} - x_k\|_2^2, \label{eq:descent_cond}
\end{align}

If $c > \lambda_{Q}^{\max}$, \eqref{eq:descent_cond} implies that $f(x_{k+1}) \leq f(x_k)$. Based on this montonicity condition, and following the proof of Proposition 3.3, p. 214, of \cite{Bertsekas_Tsitsiklis_1997} for the unscaled gradient method, it can be then shown that \eqref{eq:scaled_grad} is a convergent iteration to some minimizer of $\mathcal{P}$.

\subsubsection{Comparison with the scaled projected gradient algorithm}
In the scaled projected gradient algorithm it was shown that the step-size should be chosen so that $c>\lambda_{Q}^{\max}$, where $\lambda^{\max}_Q$ denotes the maximum eigenvalue of $Q$. Instead, Theorem \ref{thm:Alg_conv_quad_NE} requires $c>\lambda_{Q_z}^{\max}$. This latter is a less restrictive condition since $\lambda_{Q}^{\max}\ge\lambda_{Q_z}^{\max}$. Indeed, let  $v$ be the eigenvector corresponding to the eigenvalue $\lambda_{Q_z}^{\max}$. Then, we have
\begin{align}
\lambda_{Q_z}^{\max} v^\top v &=v^\top Q_z v =v^\top (Q-Q_d) v \nonumber \\ &=v^\top Q v -v^\top Q_d v
\le v^\top Q v,
\end{align}
where the first equality follows from the fact that $v$ is the eigenvector corresponding to $\lambda_{Q_z}^{\max}$ and the last inequality follows from $Q_d \succeq 0$. This implies that
\begin{align}
&\lambda_{Q_z}^{\max} \le \frac{v^\top Q v}{v^\top  v} \le \max_{z\neq0} \frac{z^\top Q z}{z^\top  z}=\lambda_{Q}^{\max},
\end{align}
where the last equality follows recalling the definition of the induced 2-norm of a symmetric square matrix.

\subsubsection{Dependence of the regularization coefficient $c$ on the number of agents}
We next provide some examples on how $\lambda_{Q_z}^{\max}$ and $\lambda_{Q}^{\max}$ are affected by the structure of the matrix $Q$ and by the number of agents $m$. As one can expect, the more the diagonal part of $Q$ is dominant with respect to the off diagonal part, the more the difference between $\lambda_{Q_z}^{\max}$ and $\lambda_{Q}^{\max}$ becomes significant: this is shown in Table \ref{tab:Q} for some choices of $Q$ (we assume $n_i=1$, $i=1,\ldots,m$, for simplicity). Matrix $\mathbf{1}_{m\times m}$ is an $m \times m$ matrix with all its elements being equal to $1$. \\
\begin{table}[t]
  \centering
  \caption{Values of $\lambda_{Q_z}^{\max}$ and $\lambda_{Q}^{\max}$ for different choices of $Q$.}
    \begin{tabular}{l|c|c}
    \toprule
$Q$ &   $\lambda_{Q_z}^{\max}$ & $\lambda_{Q}^{\max}$  \\
    \midrule \midrule
$\mathbf{1}_{m\times m}$ & $m-1$ & $m$\\
$\mathbf{1}_{m\times m} + mI_{m\times m}$ & $m-1$ & $2m$\\
$mI_{m\times m} $ & $0$ & $m$\\
    \bottomrule
    \end{tabular}%
  \label{tab:Q}%
\end{table}%

Note that the condition on the regularization coefficient $c$ provided in Theorem \ref{thm:Alg_conv_quad_NE} implicitly accounts through $\lambda_{Q_z}^{\max}$  for how much the agents decisions are coupled in the objective function.

\section{Extension to differentiable convex cost functions} \label{sec:secIV}
In this section, we provide a convergence result for Algorithm \ref{alg:Alg1} considering a more general setting as in Assumption \ref{ass:Convexity_bis} under the additional differentiability and Lipschitz continuity assumptions below.

\begin{assumption} \label{ass:grad_Lip}
The cost function $f:\mathbb{R}^n \to \mathbb{R}$ is continuously differentiable.
The gradient $\nabla f: \mathbb{R}^n \to \mathbb{R}^n$ of $f: \mathbb{R}^n \to \mathbb{R}$ is Lipschitz continuous on $X \subset \mathbb{R}^n$ with Lipschitz constant $L \in \mathbb{R}_+$, i.e., for all $x, y \in X$,
\begin{align} \label{eq:grad_Lip}
\| \nabla f(x) - \nabla f(y) \| \leq L \|x-y\|.
\end{align}
\end{assumption}

The results in the following Lemma \ref{lm:Lip_constant} and Theorem \ref{thm:Alg_conv} are obtained relying on a different approach with respect to the one based on fixed-points and map properties exploited in the proof of Theorem \ref{thm:Alg_conv_quad_NE}. Their proofs are reported in the Appendix.

\begin{lemma}\label{lm:Lip_constant}
Under Assumptions \ref{ass:Convexity_bis} and \ref{ass:grad_Lip}, for all $x,y,z \in X \subset \mathbb{R}^n$, with $x = (x^1,\ldots,x^m)$, $y = (y^1,\ldots,y^m)$, and $z = (z^1,\ldots,z^m)$,
\begin{align}\label{eq:Lip_constant}
\|\nabla\Big(\sum_{i=1}^m f(\cdot,x^{-i}) \Big ) \Big|_{z} &- \nabla\Big(\sum_{i=1}^m f(\cdot,y^{-i}) \Big ) \Big|_{z}\| \nonumber \\ &\le \sqrt{m} L \|x-y\|,
\end{align}
where $L \in \mathbb{R}_+$ is the Lipschitz constant of $\nabla f$.
\end{lemma}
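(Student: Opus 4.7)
\begin{proofof}{Lemma \ref{lm:Lip_constant} (sketch)}
The plan is to reduce the inequality to a block-by-block application of the Lipschitz property of $\nabla f$, and then to aggregate the blocks by Cauchy--Schwarz. The key observation is structural: in the sum $\sum_{j=1}^m f(z^j,x^{-j})$, the variable $z^i$ appears only in the $i$-th summand, because in $f(z^j,x^{-j})$ with $j\neq i$ the $i$-th slot is filled by the fixed entry $x^i$, not by any component of $z$. Hence the $i$-th block of $\nabla\!\left(\sum_{j=1}^m f(\cdot,x^{-j})\right)\!\big|_{z}$ is exactly $\nabla^i f(z^i,x^{-i})$, i.e.\ the $i$-th block of $\nabla f$ evaluated at the point $(z^i,x^{-i})\in X$.

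First I would write
\begin{align*}
\Big\|\nabla\!\Big(\sum_{i=1}^m f(\cdot,x^{-i})\Big)\!\Big|_{z} - \nabla\!\Big(\sum_{i=1}^m f(\cdot,y^{-i})\Big)\!\Big|_{z}\Big\|^2
= \sum_{i=1}^m \big\|\nabla^i f(z^i,x^{-i}) - \nabla^i f(z^i,y^{-i})\big\|^2,
\end{align*}
using the block decomposition above together with the fact that the Euclidean norm on $\mathbb{R}^n$ decomposes as the sum of squared norms of the blocks in $\mathbb{R}^{n_i}$.

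Next I would bound each summand by the norm of the full gradient difference, $\|\nabla^i f(z^i,x^{-i}) - \nabla^i f(z^i,y^{-i})\| \le \|\nabla f(z^i,x^{-i}) - \nabla f(z^i,y^{-i})\|$, and invoke Assumption \ref{ass:grad_Lip} (both arguments lie in $X=X^1\times\cdots\times X^m$ because $x,y,z\in X$):
\begin{align*}
\|\nabla f(z^i,x^{-i}) - \nabla f(z^i,y^{-i})\| \le L\,\|(z^i,x^{-i})-(z^i,y^{-i})\| = L\,\|x^{-i}-y^{-i}\| \le L\,\|x-y\|,
\end{align*}
where the last inequality uses $\|x^{-i}-y^{-i}\|^2 = \sum_{j\neq i}\|x^j-y^j\|^2 \le \|x-y\|^2$.

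Finally, summing the $m$ bounds of the form $\|\nabla^i f(z^i,x^{-i}) - \nabla^i f(z^i,y^{-i})\|^2 \le L^2\|x-y\|^2$ yields $m L^2 \|x-y\|^2$, and taking square roots gives the claimed factor $\sqrt{m}L$. There is no real obstacle; the only point requiring care is the bookkeeping of which variables the summand $f(z^j,x^{-j})$ actually depends on, so that the gradient in $z$ becomes block-diagonal in the sense above and the Lipschitz estimate can be applied componentwise.
\end{proofof}
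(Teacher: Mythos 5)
Your proof is correct and follows essentially the same route as the paper's: the block-diagonal structure of the gradient of the sum, the componentwise bound $\|\nabla^i f(z^i,x^{-i})-\nabla^i f(z^i,y^{-i})\|\le L\|x^{-i}-y^{-i}\|\le L\|x-y\|$, and aggregation of the $m$ squared bounds to obtain the factor $\sqrt{m}L$. The only cosmetic remark is that the final aggregation is a direct summation of squares rather than an application of Cauchy--Schwarz, but this does not affect the argument.
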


\begin{thm} \label{thm:Alg_conv}
Consider Algorithm \ref{alg:Alg1}. Under Assumptions \ref{ass:Convexity_bis} and \ref{ass:grad_Lip}, if
\begin{align}
c > \frac{m-1}{2m-1} \sqrt{m}L, \label{eq:coef_c}
\end{align}
then $\lim_{k \to \infty} \mathrm{dist}(x_k,X^*) = 0$, where $X^*$ is the set of minimizers of $\mathcal{P}$.
\end{thm}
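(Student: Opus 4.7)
The plan is to establish a sufficient-descent inequality of the form $f(x_{k+1}) \le f(x_k) - \alpha\|x_{k+1}-x_k\|^2$ for some $\alpha > 0$ whenever \eqref{eq:coef_c} holds, and then to deduce convergence of $x_k$ to $X^*$ via compactness combined with the fixed-point characterization $F_{\widetilde{T}} = X^*$ from Corollary \ref{cor:M_FP_Ttilde}. The firmly non-expansive route used for Theorem \ref{thm:Alg_conv_quad_NE} is not available without the quadratic structure, so I would chain together three ingredients: local optimality at step 6 of Algorithm \ref{alg:Alg1}, the convexity gradient inequality for $f$ evaluated at $x_{k+1}$, and the perturbation estimate of Lemma \ref{lm:Lip_constant}.

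Introducing $F(z,y) := \sum_{i=1}^m f(z^i, y^{-i})$, the first step is to test the $i$-th local minimization in step 6 against the feasible choice $z^i = x_k^i$ and sum over $i$, obtaining $F(x_{k+1}, x_k) \le m f(x_k) - c\|x_{k+1}-x_k\|^2$. The second step is the key one: applying the gradient inequality $f((x_{k+1}^i, x_k^{-i})) \ge f(x_{k+1}) + \nabla f(x_{k+1})^\top [(x_{k+1}^i, x_k^{-i}) - x_{k+1}]$ for each $i$ and summing, the blocks of $(x_{k+1}^i, x_k^{-i}) - x_{k+1}$ conspire so that every increment $x_{k+1}^j - x_k^j$ appears in exactly $m-1$ of the $m$ summands, yielding the clean identity $m f(x_{k+1}) - F(x_{k+1}, x_k) \le (m-1)\nabla f(x_{k+1})^\top (x_{k+1} - x_k)$.

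Third, I would combine the variational inequality for $x_{k+1} = \widetilde{T}(x_k)$, evaluated at $z = x_k$, which reads $(x_{k+1}-x_k)^\top \nabla_z F(x_{k+1}, x_k) \le -2c\|x_{k+1}-x_k\|^2$, with Lemma \ref{lm:Lip_constant} applied at $z = x_{k+1}$, $x = x_k$, $y = x_{k+1}$; after identifying $\nabla_z F(x_{k+1}, x_{k+1}) = \nabla f(x_{k+1})$, this gives $\|\nabla_z F(x_{k+1}, x_k) - \nabla f(x_{k+1})\| \le \sqrt{m} L \|x_{k+1}-x_k\|$, and Cauchy-Schwarz then yields $(x_{k+1}-x_k)^\top \nabla f(x_{k+1}) \le (\sqrt{m} L - 2c)\|x_{k+1}-x_k\|^2$. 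Substituting this into the bound of the second step and then into that of the first, all terms collapse to $m(f(x_{k+1}) - f(x_k)) \le [(m-1)\sqrt{m} L - (2m-1)c]\|x_{k+1}-x_k\|^2$, which is strictly negative exactly under the hypothesis \eqref{eq:coef_c}.

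Once monotone descent is established, boundedness below of $f$ on the compact set $X$ forces $\sum_k\|x_{k+1}-x_k\|^2 < \infty$, so $\|x_{k+1}-x_k\| \to 0$. Compactness of $X$ then produces a cluster point $\bar{x}$ of $\{x_k\}$; continuity of $\widetilde{T}$ as a parametric projection, combined with $\|x_{k_j+1} - x_{k_j}\| \to 0$ along any convergent subsequence $x_{k_j} \to \bar{x}$, forces $\bar{x} = \widetilde{T}(\bar{x})$, hence $\bar{x} \in X^*$ by Corollary \ref{cor:M_FP_Ttilde}. A standard contradiction argument using compactness of $X$ upgrades ``every cluster point lies in $X^*$'' to $\mathrm{dist}(x_k, X^*) \to 0$. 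The hard part will be the second step: recognizing that applying convexity at $x_{k+1}$, rather than at $x_k$ or at a midpoint, is precisely what produces the $(m-1)$ factor that, after combining with the $-2c$ from optimality and the $-c$ from the first step, recovers the exact denominator $2m-1$ in the stated threshold.
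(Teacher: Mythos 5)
Your proposal is correct and follows essentially the same route as the paper's proof: the local-optimality test against $z=x_k$, the convexity inequality at $x_{k+1}$ producing the $(m-1)$ factor (the paper phrases this as convexity of $\sum_{i=1}^m f(x_{k+1}^i,\cdot)$ together with the identity $\nabla\big(\sum_i f(x^i,\cdot)\big)\big|_x=(m-1)\nabla f(x)$, which is the same computation), the variational inequality combined with Lemma \ref{lm:Lip_constant} and Cauchy--Schwarz to get the $(\sqrt{m}L-2c)$ bound, the resulting descent coefficient $(m-1)\sqrt{m}L-(2m-1)c$, and the telescoping/compactness/continuity-of-$\widetilde{T}$ conclusion via Corollary \ref{cor:M_FP_Ttilde}. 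No gaps.
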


Note that the iterates generated by Algorithm \ref{alg:Alg1} may not necessarily converge to a minimizer of $\mathcal{P}$, since $\{x_k\}_{k \geq 0}$ may exhibit an oscillatory behavior; however, all its limit points achieve the optimal value.

By inspecting \eqref{eq:coef_c}, it can be observed that as the number of agents $m$ tends to infinity, $c$ tends to infinity as well, since $\lim_{m \to +\infty} \frac{m-1}{2m-1} \sqrt{m}L = +\infty$. This stems from the fact that Lipschitz constant in Lemma \ref{lm:Lip_constant} depends on $m$. This fact implies that the optimization problem at step 6 of Algorithm \ref{alg:Alg1} tends to be numerically ill-conditioned as the number of agents tends to infinity; but this is also the case for the centralized problem $\mathcal{P}$.
It should be also noted that in the other extreme situation of a single agent ($m=1$), \eqref{eq:coef_c} implies that any $c > 0$ is sufficient for the assertion of Theorem \ref{thm:Alg_conv} to hold. This is in line with
\cite{Bertsekas_Tsitsiklis_1997} (Proposition 4.1, p. 233),
since step 6 of Algorithm \ref{alg:Alg1} with $m=1$, reduces to the standard proximal minimization.
The fact that the higher $m$, the higher $c$ needs to be for the statement of Theorem \ref{thm:Alg_conv} to hold, can be intuitively justified by the fact that for any $i$, $i=1,\ldots,m$, a higher $c$ will slow down the progress of the iterates $\{x^i(k)\}_{k\geq0}$, of agent $i$ towards its optimal value, so that all agents get ``synchronized'' in optimizing the common objective function with respect to their local decision vectors.

The approach adopted in Lemma \ref{lm:Lip_constant} and Theorem \ref{thm:Alg_conv} can also be applied when the cost function $f$ is quadratic, since  Assumptions \ref{ass:Convexity_bis} and \ref{ass:grad_Lip} are both satisfied under Assumption \ref{ass:Convexity}. The following Lemma applies to that case, and makes the Lipschitz constant $\sqrt{m}L$ in \eqref{eq:Lip_constant} explicit as a function of $\lambda^{\max}_{Q_z}$ (see the Appendix for a proof).

\begin{lemma} \label{lm:Lip_constant_quad}
Consider the set-up of Lemma \ref{lm:Lip_constant}, and Assumption \ref{ass:Convexity}. The Lipschitz constant $\sqrt{m}L$ in \eqref{eq:Lip_constant} can be replaced by $2\lambda^{\max}_{Q_z}$, where $\lambda^{\max}_{Q_z}$ denotes the maximum eigenvalue of matrix $Q_z$.
\end{lemma}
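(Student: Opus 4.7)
The plan is to exploit the closed-form gradient of the quadratic cost to compute the left-hand side of \eqref{eq:Lip_constant} exactly, rather than going through the generic Lipschitz argument of Lemma \ref{lm:Lip_constant}. Writing $Q = Q_d + Q_z$, the quadratic expansion of $f$ gives, for each $i=1,\ldots,m$,
\begin{align*}
f(z^i,x^{-i}) = (z^i)^\top Q_{i,i} z^i + 2(z^i)^\top \sum_{j\neq i} Q_{i,j} x^j + q_i^\top z^i + C_i(x^{-i}),
\end{align*}
where $C_i(x^{-i})$ collects terms that do not depend on $z^i$. Hence
\begin{align*}
\nabla^i f(\cdot,x^{-i})\big|_{z^i} = 2 Q_{i,i} z^i + 2 \sum_{j\neq i} Q_{i,j} x^j + q_i.
\end{align*}

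Next I would stack these $m$ block-components into a single gradient vector. Since $Q_d$ is block-diagonal with blocks $Q_{i,i}$, the first term contributes $2 Q_d z$. Since $Q_z$ has zero $(i,i)$-blocks and off-diagonal blocks equal to $Q_{i,j}$ for $j \neq i$, the second term contributes $2 Q_z x$. This yields the compact identity
\begin{align*}
\nabla\Big(\sum_{i=1}^m f(\cdot,x^{-i})\Big)\Big|_{z} = 2 Q_d z + 2 Q_z x + q,
\end{align*}
and an analogous formula with $y$ in place of $x$. Subtracting, the $Q_d z$ and $q$ terms cancel, leaving the clean expression
\begin{align*}
\nabla\Big(\sum_{i=1}^m f(\cdot,x^{-i})\Big)\Big|_{z} - \nabla\Big(\sum_{i=1}^m f(\cdot,y^{-i})\Big)\Big|_{z} = 2 Q_z (x-y).
\end{align*}

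The conclusion then follows by taking norms: since $Q_z$ is symmetric, $\|Q_z(x-y)\| \leq \|Q_z\|_2 \, \|x-y\|$, and for a symmetric matrix the induced $2$-norm coincides with its spectral radius, which is $\lambda^{\max}_{Q_z}$ in the sign convention of the paper (recall that $Q_z$ has zero trace and $\lambda^{\max}_{Q_z} \geq 0$ was already observed in Section \ref{sec:secIII}). Thus
\begin{align*}
\Big\|\nabla\Big(\sum_i f(\cdot,x^{-i})\Big)\Big|_z - \nabla\Big(\sum_i f(\cdot,y^{-i})\Big)\Big|_z\Big\| \leq 2 \lambda^{\max}_{Q_z} \|x-y\|,
\end{align*}
which is exactly the claim. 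No obstacle of substance is expected here: once the key observation that the $Q_d$-contribution depends only on $z$ (and therefore cancels in the difference) is made, the proof collapses to a direct matrix computation plus the standard spectral-norm inequality. The main conceptual point is that the effective Lipschitz constant governing the $x \mapsto \nabla(\sum_i f(\cdot,x^{-i}))|_z$ dependence is dictated solely by the off-block-diagonal part $Q_z$, which is sharper than the generic bound $\sqrt{m}L = 2\sqrt{m}\,\lambda^{\max}_Q$ coming from Lemma \ref{lm:Lip_constant} and is consistent with the bound $c > \lambda^{\max}_{Q_z}$ established by the fixed-point analysis in Theorem \ref{thm:Alg_conv_quad_NE}.
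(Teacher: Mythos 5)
Your proof is correct and takes essentially the same route as the paper's: expand the quadratic to get $\nabla\big(\sum_{i=1}^m f(\cdot,x^{-i})\big)\big|_{z} = 2Q_d z + 2Q_z x + q$, note that the $z$- and $q$-dependent terms cancel in the difference so that only $2Q_z(x-y)$ remains, and bound its norm spectrally. The one caveat (shared with the paper, which states the same inequality without comment) is that your final step identifies $\|Q_z\|_2$ with $\lambda^{\max}_{Q_z}$, whereas for a symmetric matrix the induced $2$-norm is $\max_i |\lambda_i|$, which can exceed the largest (positive) eigenvalue of a zero-trace matrix when the most negative eigenvalue dominates in magnitude.
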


As a consequence, Theorem \ref{thm:Alg_conv} rewrites as follows.

\begin{thm} \label{thm:Alg_conv_quad}
Consider Algorithm \ref{alg:Alg1}. Under Assumption \ref{ass:Convexity}, if
\begin{align}
c > \frac{m-1}{2m-1} 2\lambda^{\max}_{Q_z}, \label{eq:coef_c_quad}
\end{align}
then $\lim_{k \to \infty} \mathrm{dist}(x_k,X^*) = 0$, where $X^*$ is the set of minimizers of $\mathcal{P}$.
\end{thm}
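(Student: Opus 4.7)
The plan is to derive Theorem \ref{thm:Alg_conv_quad} as a direct specialization of Theorem \ref{thm:Alg_conv} to the quadratic setting, exploiting Lemma \ref{lm:Lip_constant_quad} to sharpen the Lipschitz constant that enters the sufficient condition on the regularization coefficient.

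First I would verify that Assumption \ref{ass:Convexity} places us inside the hypotheses of Theorem \ref{thm:Alg_conv}. The function $f(x) = x^\top Q x + q^\top x$ with $Q = Q^\top \succeq 0$ is continuously differentiable and jointly convex, and the sets $X^i$ are non-empty, compact and convex, so Assumption \ref{ass:Convexity_bis} holds. Moreover $\nabla f(x) = 2Qx + q$ is affine, hence Lipschitz continuous on $X$ with constant $L = 2 \lambda^{\max}_Q$, so Assumption \ref{ass:grad_Lip} also holds. Consequently Theorem \ref{thm:Alg_conv} is applicable and guarantees $\lim_{k\to\infty} \mathrm{dist}(x_k,X^*) = 0$ as soon as $c$ exceeds $\frac{m-1}{2m-1} \sqrt{m} L$, where the factor $\sqrt{m} L$ is exactly the bound supplied by Lemma \ref{lm:Lip_constant} on the Lipschitz-type quantity appearing in \eqref{eq:Lip_constant}.

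Next, I would observe that the proof of Theorem \ref{thm:Alg_conv} in the Appendix uses the Lipschitz constant of Lemma \ref{lm:Lip_constant} only through the bound \eqref{eq:Lip_constant}. Hence any valid replacement of $\sqrt{m} L$ in \eqref{eq:Lip_constant} leads, without any modification of the remaining argument, to the corresponding replacement in the threshold on $c$. Lemma \ref{lm:Lip_constant_quad} provides precisely such a replacement in the quadratic case: the constant $\sqrt{m} L$ in \eqref{eq:Lip_constant} can be taken to be $2 \lambda^{\max}_{Q_z}$. Substituting this sharper value into $c > \frac{m-1}{2m-1} \sqrt{m} L$ immediately yields \eqref{eq:coef_c_quad}, and the conclusion $\lim_{k\to\infty} \mathrm{dist}(x_k,X^*) = 0$ follows from Theorem \ref{thm:Alg_conv}.

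The main conceptual point to check, which I do not expect to be a real obstacle, is that Lemma \ref{lm:Lip_constant_quad} bounds the \emph{same} quantity, in the \emph{same} norm, as the one appearing in the proof of Theorem \ref{thm:Alg_conv}; this is precisely how Lemma \ref{lm:Lip_constant_quad} is stated, so the substitution is legitimate and no other part of the argument needs to be revisited. The sharpening is consistent with the inequality $\lambda^{\max}_{Q_z} \le \lambda^{\max}_Q$ established in Section \ref{sec_gradient}, and it recovers, up to the factor $\tfrac{m-1}{2m-1} \cdot 2$, the tighter bound $c > \lambda^{\max}_{Q_z}$ of Theorem \ref{thm:Alg_conv_quad_NE} obtained via the fixed-point approach.
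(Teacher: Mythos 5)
Your proposal is correct and follows exactly the route the paper takes: the paper presents Theorem \ref{thm:Alg_conv_quad} as an immediate consequence of Theorem \ref{thm:Alg_conv} once Lemma \ref{lm:Lip_constant_quad} replaces the constant $\sqrt{m}L$ in \eqref{eq:Lip_constant} by $2\lambda^{\max}_{Q_z}$, which is precisely your argument. Your additional checks (that Assumption \ref{ass:Convexity} implies Assumptions \ref{ass:Convexity_bis} and \ref{ass:grad_Lip}, and that the proof of Theorem \ref{thm:Alg_conv} uses the Lipschitz constant only through the bound \eqref{eq:Lip_constant}) are exactly the points that make the substitution legitimate.
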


Note that \eqref{eq:coef_c_quad} implies that as the number of agents $m$ tends to infinity, $c$ tends to $ \lambda^{\max}_{Q_z}$. The latter, however, does not necessarily imply that $c$ remains finite, since, in certain cases, $\lambda^{\max}_{Q_z}$ may tend to infinity.

The condition $c > \lambda^{\max}_{Q_z}$ of Theorem \ref{thm:Alg_conv_quad_NE} is sufficient for \eqref{eq:coef_c_quad} to be satisfied, since $\frac{1}{2} > \frac{m-1}{2m-1}$, for all $m$.
However, Theorem \ref{thm:Alg_conv_quad_NE} ensures convergence to some minimizer and not just convergence in optimal value as in Theorem \ref{thm:Alg_conv}.

\section{Optimal charging of electric vehicles} \label{sec:secV}
\subsection{Problem setup} \label{sec:secVA}
We consider the problem of optimizing the charging strategy for a fleet of $m$ plug-in electric vehicles (PEVs) over a finite horizon $T$. Following \cite{Grammatico_etal_2015,Franc_2014,Hiskens_2013}, the PEV charging problem is given by the following optimization problem.
\begin{align}
&\min_{\big \{ \{ x^{i}(t) \}_{i=1}^m \big \}_{t=0}^T} \frac{1}{m}\sum_{t=0}^{T} p(t)\Big ( d(t)+\sum_{i=1}^{m} x^{i}(t) \Big )^2 \label{pb:EVpb1}\\
&\text{subject to } \nonumber \\
&\sum_{t=0}^{T} x^{i}(t) = \gamma^{i}, \text{ for all } i=1,\ldots, m \nonumber \\
& \underline{x}^{i}(t) \le x^{i}(t) \le \overline{x}^{i}(t), \text{ for all } i=1,\ldots ,m,~ t=0,\ldots, T, \nonumber 
\end{align}
where $p(t) \in \mathbb{R}$ is an electricity price coefficient at time $t$, $d(t) \in \mathbb{R}$ represents the non-PEV demand at time $t$, $x^{i}(t) \in \mathbb{R}$ is the charging rate of vehicle $i$ at time $t$, $\gamma^i \in \mathbb{R}$ represents a prescribed charging level to be reached by each vehicle $i$ at the end of the considered time horizon, and $\underline{x}^{i}(t), \overline{x}^{i}(t) \in \mathbb{R}$ are bounds on the minimum and maximum value of $x^{i}(t)$, respectively. The objective function in \eqref{pb:EVpb1} encodes the total electricity cost given by the demand (both PEVs and non-PEVs) multiplied by the price of electricity, which in turn depends linearly on the total demand through $p(t)$, thus giving rise to the quadratic function in \eqref{pb:EVpb1}. This linear dependency of price with respect to the total demand models the fact that agents/vehicles are price anticipating authorities, anticipating their consumption to have an effect on the electricity price (see \cite{Gharesifard_etal_2016} for further elaboration).
Problem \eqref{pb:EVpb1} can be rewritten in a more compact form as
\begin{align}
&\min_{x \in \mathbb{R}^{m(T+1)}} (\dbf+Ax)^\top P(\dbf+Ax)\label{pb:EVpb2}\\
&\text{subject to } x^i \in X^i, \text{ for all } i=1,\ldots, m, \nonumber
\end{align}
where $P=(1/m) \mathrm{diag}(p) \in \mathbb{R}^{(T+1) \times (T+1)}$, and $\mathrm{diag}(p)$ is a matrix with $p = (p(0),\ldots,p(T)) \in \mathbb{R}^{T+1}$ on its diagonal. $A=\mathbf{1}_{1 \times m} \otimes I \in \mathbb{R}^{(T+1) \times m}$, where $\otimes$ denotes the Kronecker product, and $I$ is the identity matrix of appropriate dimension. Moreover, $d=(\dbf(0), \ldots, \dbf(T)) \in \mathbb{R}^{T+1}$, $x=(x^1, \ldots, x^{m}) \in \mathbb{R}^{m(T+1)}$ with $x^i=(x^{i}(0), \ldots, x^{i}(T)) \in \mathbb{R}^{T+1}$, and $X^i$ encodes the constraints of each vehicle $i$, $i=1,\ldots,m$, in \eqref{pb:EVpb1}.

Problem \eqref{pb:EVpb2} can be solved in a decentralized fashion by means of Algorithm \ref{alg:Alg1}. We compute the value of $c$ so as to satisfy \eqref{eq:coef_c_quad} and, hence, to ensure convergence of the algorithm.
Note that the objective function in \eqref{pb:EVpb2} is not strictly convex as $A^\top PA=\mathbf{1}_{m	 \times m} \otimes P$, and it exhibits a structure that allows for reduced information exchange as described in Remark 1. Indeed, at iteration $k+1$ of Algorithm \ref{alg:Alg1}, the central authority needs to collect the solution of each agent but it only has to broadcast $\bar{x}_k=\dbf+Ax_k$. Each agent $i$, $i=1,\ldots,m$, can then compute its objective as $f(\zbf^i,x^{-i}_k)=(\bar{x}_k-x^i_k+\zbf^i)^\top P(\bar{x}_k-x^i_k+\zbf^i)$. Step 6 in Algorithm \ref{alg:Alg1} for problem \eqref{pb:EVpb2} reduces then to
\begin{align*}
&x^i_{k+1}=\widetilde{T}^i(x_k) =\nonumber \\ & \arg\!\min_{\zbf^i\in X^i} \big \{ (\bar{x}_k-x^i_k+\zbf^i)^\top P(\bar{x}_k-x^i_k+\zbf^i) + c\|z^i-x_k^i\|^2\! \big \}.
\end{align*}

\subsection{Simulation results} \label{sec:secVB}
We consider first a fleet of $m=100$ PEVs, each of them having to reach a different level of charge $\gamma^i \in [0.1, 0.3]$, $i=1,\ldots,m$, at the end of a time horizon $T=24$, corresponding to hourly steps. The bounds on $x^i(t)$ are taken to be $\underline{x}^{i}(t)=0$ and $\overline{x}^{i}(t)=0.02$, for all $i=1,\ldots,m$, $t=0,\ldots,T$. The non-PEV demand profile is retrieved from \cite{Hiskens_2013}, whereas the price coefficient is $p(t)=0.15$, $t=0,\ldots,T$. Note that, as in \cite{Franc_2014}, $x^{i}(t)$ corresponds to normalized charging rate, which is then rescaled to be turned into reasonable power values.
All optimization problems are solved using CPLEX \cite{cplex}.

For comparison purposes, problem \eqref{pb:EVpb2} is solved first in a centralized fashion, achieving an optimal objective value $f^\star=2.67$. It is then solved in a decentralized fashion by means of Algorithm \ref{alg:Alg1}, setting $c=0.1485$. Note that for \eqref{eq:coef_c_quad} to be satisfied, we should have $c >0.1478$.
In Figure \ref{fig:Es1_f1a} the objective value $f(x_k)$ achieved at iteration $k$ of Algorithm \ref{alg:Alg1} is depicted, whereas Figure \ref{fig:Es1_f1b} shows $\|x_k-x_{k-1}\|$. After 30 iterations the difference between the decentralized and the centralized objective is $f(x_{30})-f^\star=1.95\cdot10^{-6}$, thus achieving numerical convergence.

\begin{figure}[t!]
\centering
\subfloat[Objective value $f(x_k)$ (red stars) at iteration $k$ of Algorithm \ref{alg:Alg1}, and \newline optimal value $f^\star$ (dashed line) of the centralized problem counterpart.]{\includegraphics[trim=2cm 8.25cm 2cm 14.25cm, clip=true,scale=0.55]{./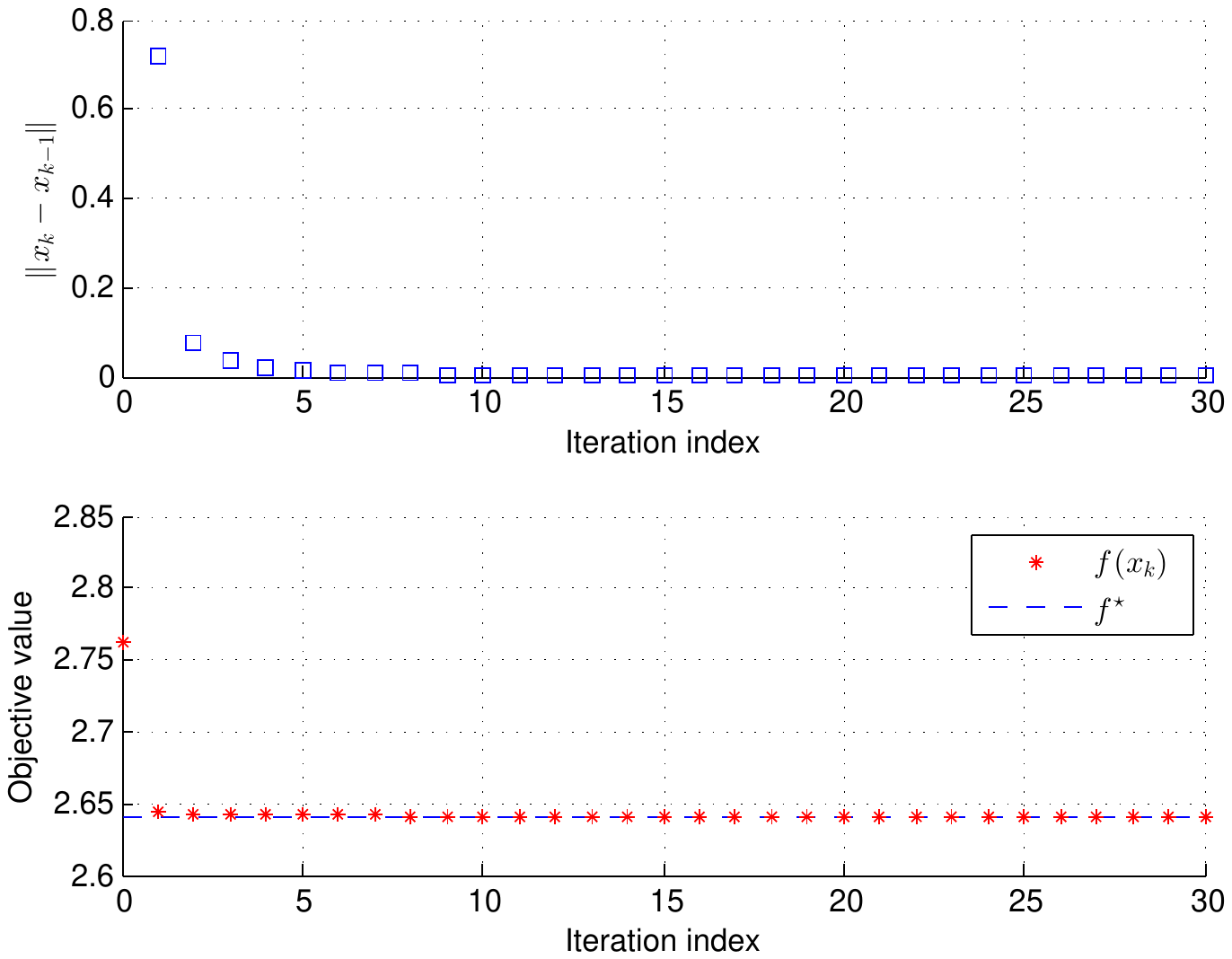} \label{fig:Es1_f1a}}\\
\subfloat[Iteration error $\|x_{k}-x_{k-1}\|$ (blue squares).]{\includegraphics[trim=2cm 14cm 2cm 8.75cm, clip=true,scale=0.55]{./Es1_N100_f1r.pdf}\label{fig:Es1_f1b}}
\caption{Objective value and iteration error for $m=100$, $\gamma^i \in [0.1, 0.3]$, $\underline{x}^{i}(t)=0$ and $\overline{x}^{i}(t)=0.02$, for $i=1,\ldots,m$, $t=0,\ldots,T$.}
\end{figure}

\begin{figure}[t!]
\centering
\includegraphics[trim=2cm 7cm 2cm 7cm, clip=true,scale=0.45]{./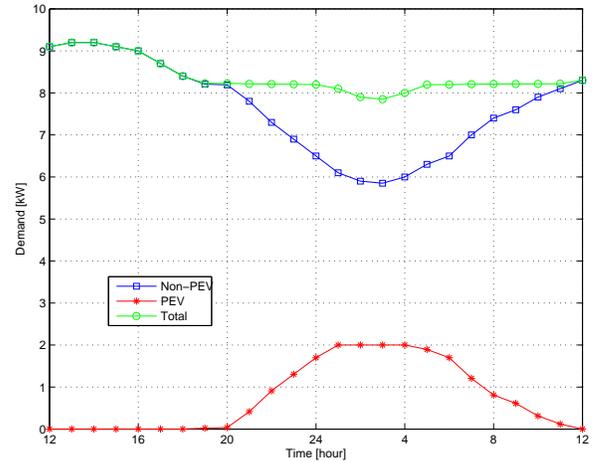}\\
\caption{Demand along a one day time horizon: non-PEV demand (blue squares), PEV demand computed via Algorithm \ref{alg:Alg1} (red stars), and total demand (green circles). The simulation set-up corresponds to $m=100$, $\gamma^i \in [0.1, 0.3]$, $\underline{x}^{i}(t)=0$ and $\overline{x}^{i}(t)=0.02$, for $i=1,\ldots,m$, $t=0,\ldots,T$.}
\label{fig:Es1_f2}
\end{figure}

\begin{table}[t!]
  \centering
  \caption{Number of iterations $k$ for $\frac{f(x_k)-f^\star}{f^\star}<10^{-6}$, for different values of $c$. }
    \begin{tabular}{|l|c|c|c|c|c|c|c|c|}
    \hline
 $c$ &  0 & 0.05 & 0.075 & 0.1 & 0.1478 & 0.2 &0.4 \\
 \hline
$k$  & - & - & 10  & 16 & 27 & 37 & 77 \\
    \hline
    \end{tabular}%
  \label{tab:conv}%
\end{table}%

Figure \ref{fig:Es1_f2} depicts the PEV, non-PEV, and total demand along the considered time horizon. As it can be seen, the PEV demand is optimized so that the over-night valley of the non-PEV demand is nearly filled-up. Note that due to the constraints in \eqref{pb:EVpb2}, it is not possible to further increase the PEV demand during the time interval between 1 and 4.

Considering the same setting, we perform a parametric analysis, running Algorithm \ref{alg:Alg1} for different values of $c$. In Table \ref{tab:conv} the number of iterations needed to achieve a relative error between the decentralized and the centralized objective value $\frac{f(x_k)-f^\star}{f^\star}<10^{-6}$ is reported. It can be observed that, as $c$ increases, numerical convergence requires more iterations.
Note that if we choose a value of $c$ that does not satisfy \eqref{eq:coef_c_quad}, Algorithm \ref{alg:Alg1} does not always converge (see the first two rows of Table \ref{tab:conv}, for $c = 0$ and $c=0.05$). For some values of $c$ numerical convergence is achieved (e.g., rows 3 and 4 of Table \ref{tab:conv}); however, this is not guaranteed by our analysis.

We consider now a fleet of $m = 1000$ PEVs, and modify the required charging level such that $\gamma^i \in [0.005, 0.025]$, for all $i=1,\ldots,m$, and set the bounds on $x^{i}(t)$ to $\underline{x}^{i}(t)=0$ and $\overline{x}^{i}(t)=0.0025$, for all $i=1,\ldots,m$, $t=0,\ldots,T$.
All the other parameters are left unchanged with respect to the previous set-up.
F
After 30 iterations the difference between the decentralized and the centralized objective is $f(x_{30})-f^\star=8.18\cdot10^{-7}$, thus achieving numerical convergence. However, with the new set of parameters, the peak of the PEV demand is designed so as to fill the over-night valley of the non-PEV demand.

It should be noted that our algorithm converges to a minimizer of \eqref{pb:EVpb1}, which is the cooperative problem counterpart of \cite{Grammatico_etal_2015,Franc_2014,Hiskens_2013}, with a finite number of agents/vehicles (see Figure \ref{fig:Es2_f3}), as opposed to the aforementioned references, where convergence to a Nash equilibrium at the limiting case of an infinite population of agents is established. Note also that the algorithm proposed in \cite{Gan_etal_2013} ensures convergence to the optimal value with a finite number of agents, but not to the optimal solution.

\begin{figure}[t!]
\centering
\includegraphics[trim=2cm 8cm 2cm 8.2cm, clip=true,scale=0.5]{./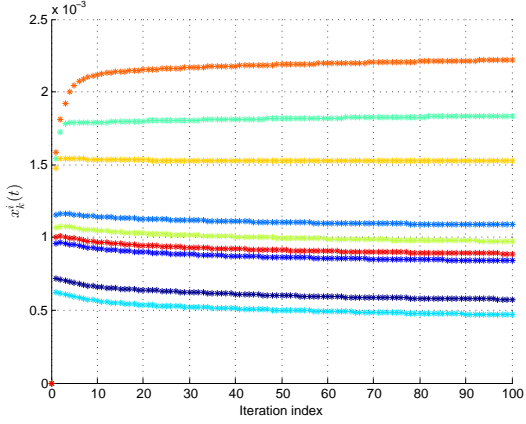}\\
\caption{Evolution of the iterates $x_k^i(t)$ generated by Algorithm \ref{alg:Alg1} at $t=12$ as a function of the iteration index $k$, for $i=1,\ldots,10$, i.e., the first $10$ vehicles of the $1000$-vehicle fleet. }
\label{fig:Es2_f3}
\end{figure}

\section{Concluding remarks} \label{sec:secVI}
In this paper, we investigated convergence of a decentralized, regularized Jacobi algorithm for multi-agent, convex optimization programs with a common objective and subject to separable constraints. It was shown that  in the case where the objective function is quadratic the algorithm converges to some minimizer of the centralized problem via fixed point arguments. In the more general case of a convex but not necessarily quadratic cost, we were able to show that all limit points of the proposed algorithm are minimizers of the centralized problem.
The efficacy of the proposed algorithm was illustrated by applying it to the problem of optimal charging of electric vehicles, achieving convergence with a finite number of vehicles.
Current work concentrates on investigating the rate of convergence for general convex functions, exploiting the recent fixed-point theoretic results of \cite{BMG_2016}.

\appendix
%

\section{Algorithm analysis for differentiable convex objective functions} \label{sec:app2}


We report some identities which hold directly from the definition of $f$ and its gradient, that will be exploited in the following.
\begin{align}
\sum_{i=1}^m f(x^i,x^{-i}) &= m f(x), \label{eq:if1} \\
\nabla \Big ( \sum_{i=1}^m f(\cdot,x^{-i}) \Big ) \Big |_x &= \nabla f(x), \label{eq:if2} \\
 \nabla \Big ( \sum_{i=1}^m f(x^i,\cdot) \Big ) \Big |_x &= (m-1) \nabla f(x), \label{eq:if3}
\end{align}
where by the notation $\nabla\Big ( \sum_{i=1}^m f(\cdot,x^{-i}) \Big ) \Big |_x $ (similarly for $\nabla\Big ( \sum_{i=1}^m f(x^i,\cdot) \Big ) \Big |_x $), we imply that the gradient of $\sum_{i=1}^m f(\cdot,x^{-i})$ is evaluated at $x$, where for each $i$, $i=1,\ldots,m$, the appropriate component $x^i$ of $x$ is employed. Similar is the interpretation of the various gradient terms appearing in the subsequent derivations.

\subsubsection*{Proof of Lemma \ref{lm:Lip_constant}}
Fix any $x,y,z \in X \subset \mathbb{R}^n$, with $x = (x^1,\ldots,x^m)$, $y = (y^1,\ldots,y^m)$, and $z = (z^1,\ldots,z^m)$.
We then have that
\begin{align}
&\|\nabla \Big(\sum_{i=1}^m f(\cdot,x^{-i}) \Big ) \Big|_{z} - \nabla\Big(\sum_{i=1}^m f(\cdot,y^{-i}) \Big ) \Big|_{z}\| \nonumber \\
&\hspace{1cm}  = \| \sum_{i=1}^m\Big( \nabla f(\cdot,x^{-i}) - \nabla  f(\cdot,y^{-i}) \Big ) \Big|_{z}\| \nonumber \\
&\hspace{1cm} = \| \sum_{i=1}^m  \begin{bmatrix} 0\\\vdots\\ \Big(\nabla^i f(\cdot,x^{-i}) - \nabla^i  f(\cdot,y^{-i}) \Big )\Big|_{z}\\ \vdots\\ 0\end{bmatrix} \| \nonumber \\
&\hspace{1cm}=\|\begin{bmatrix} \nabla^1 f(z^1,x^{-1}) - \nabla^1  f(z^1,y^{-1}) \\\vdots\\  \nabla^i f(z^i,x^{-i}) - \nabla^i  f(z^i,y^{-i}) \\ \vdots\\ \nabla^m f(z^m,x^{-m}) - \nabla^m  f(z^m,y^{-m})\end{bmatrix}\| , \label{eq:Lip_constant_pf1}
\end{align}
where the first equality is obtained by exchanging the gradient and the summation order, since the gradient is a linear operator, the second equality is due to the fact that, for each $i$, $i=1,\ldots,m$, all components of $\nabla f(\cdot,x^{-i})$ (similarly for $\nabla f(\cdot,y^{-i})$ ) will be zero apart from the $i$-th one that will be $\nabla^i f(\cdot,x^{-i})$, and the last equality is achieved by performing the summation and evaluating the quantity in the parentheses at $z$. We then have that
\begin{align}
&\sqrt{ \sum_{i=1}^m \| \nabla^i f(z^i,x^{-i}) - \nabla^i  f(z^i,y^{-i}) \|^2 } \nonumber \\
&\leq \sqrt{\sum_{i=1}^m L^2 \|x^{-i} - y^{-i} \|^2 } \leq \sqrt{m} L \|x-y\| 
\end{align}
where the first expression follows from \eqref{eq:Lip_constant_pf1}, the first inequality is due to fact that, as a consequence of Assumption \ref{ass:grad_Lip}, for each $i=1,\ldots,m$, the $i$-th component of the gradient is Lipschitz continuous with a Lipschitz constant upper-bounded by $L$, i.e., $\|\nabla^if(x)-\nabla^if(y)\| \le \|\nabla f(x)-\nabla f(y)\| \le L \|x-y\|$, for all $x, y \in X$. The last inequality follows by noticing that $\|x^{-i} - y^{-i}\| \leq \|x-y\|$. Combining \eqref{eq:Lip_constant_pf1} and the inequality above 
leads then to \eqref{eq:Lip_constant}, and hence concludes the proof.

\subsubsection*{Proof of Theorem \ref{thm:Alg_conv}}
From step 6 of Algorithm \ref{alg:Alg1} and by the definition of $\widetilde{T}$ in \eqref{eq:Tmapsum}, we have that $x_{k+1} = \widetilde{T}(x_{k})$. Therefore,
$x_{k+1}$ is optimal for the objective function that appears in the right-hand side of \eqref{eq:Tmapsum}, hence we have that, for all $y \in X$,
\begin{align}
\sum_{i=1}^m f(x_{k+1}^i,x_k^{-i}) &+ c\|x_{k+1}-x_{k}\|^2 \nonumber \\ &\le \sum_{i=1}^m f(y^i,x_k^{-i})+c\|y^i-x_{k}\|^2.
\end{align}
Since $x_k \in X$, setting $y=x_{k}$ and rearranging some terms, we have that
\begin{align}\label{eq:OP2}
\sum_{i=1}^m f(x_{k+1}^i,x_k^{-i}) \le \sum_{i=1}^m f(x_k^{i},x_k^{-i}) -c \|x_{k+1}-x_k\|^2.
\end{align}
By convexity of $\sum_{i=1}^m f(x_{k+1}^i,\cdot)$ due to Assumption \ref{ass:grad_Lip}, we have that
\begin{align}\label{eq:cvx-2}
\sum_{i=1}^m f(x_{k+1}^i,&x_k^{-i}) \ge \sum_{i=1}^m f(x_{k+1}^i,x_{k+1}^{-i}) \nonumber \\ &+ (x_{k}-x_{k+1})^\top \nabla\Big(\sum_{i=1}^m f(x_{k+1}^{i},\cdot) \Big ) \Big|_{x_{k+1}}.
\end{align}
Combining \eqref{eq:OP2} and \eqref{eq:cvx-2}, and after rearranging some terms, we obtain that
\begin{align}
\sum_{i=1}^m f(&x_{k+1}^i,x_{k+1}^{-i}) \le \sum_{i=1}^m f(x_k^{i},x_k^{-i}) -c\|x_{k+1}-x_k\|^2 \nonumber \\
&+ (x_{k+1}-x_{k})^\top \nabla\Big(\sum_{i=1}^m f(x_{k+1}^{i},\cdot) \Big ) \Big|_{x_{k+1}},
\end{align}
which in view of \eqref{eq:if1} and \eqref{eq:if3} can be rewritten as
\begin{align}\label{eq:DL4-2}
m f(x_{k+1}) &\le m f(x_k) -c\|x_{k+1}-x_k\|^2 \nonumber \\ &+ (m-1)(x_{k+1}-x_k)^\top \nabla f(x_{k+1}).
\end{align}

Since $x_{k+1} = \widetilde{T}(x_k)$ and due to \eqref{eq:Tmapsum}, by the first part of Proposition \ref{prop:opt_VI}, with the objective function at the right-hand side of \eqref{eq:Tmapsum} in place of $f$, we obtain that
\begin{align}
(x_k - x_{k+1})^\top \nabla \Big(\sum_{i=1}^m f(\cdot,&x_k^{-i}) \Big ) \Big|_{x_{k+1}} \nonumber \\ &- 2c \|x_{k+1}-x_k\|^2 \geq 0. \label{eq:VI1}
\end{align}
Note that $\nabla \Big(\sum_{i=1}^m f(\cdot,x_k^{-i}) \Big ) \Big|_{x_{k+1}} + 2c (x_{k+1} - x_k)$ is the gradient of the objective function at the right-hand side of \eqref{eq:Tmapsum}, evaluated at $x_{k+1}$.
Adding the term $(x_{k+1}-x_k)^\top \nabla\Big(\sum_{i=1}^m f(\cdot,x_{k+1}^{-i}) \Big ) \Big|_{x_{k+1}}$ in both sides of \eqref{eq:VI1}, we obtain that
\begin{align*}
&(x_{k+1}-x_k)^\top \nabla\Big(\sum_{i=1}^m f(\cdot,x_{k+1}^{-i}) \Big ) \Big|_{x_{k+1}} \le -2c\|x_{k+1}-x_{k}\|^2 \nonumber \\
&- (x_{k+1}-x_k)^\top \Bigg [ \nabla\Big(\sum_{i=1}^m f(\cdot,x_{k}^{-i}) \Big ) \Big|_{x_{k+1}}  \nonumber \\ & \hspace{4cm} - \nabla\Big(\sum_{i=1}^m f(\cdot,x_{k+1}^{-i}) \Big ) \Big|_{x_{k+1}}  \Bigg ].
\end{align*}
By the Cauchy-Schwarz inequality, and due to Lemma \ref{lm:Lip_constant}, which holds under Assumption \ref{ass:grad_Lip}, we have that
\begin{align}
(x_{k+1}-x_k)^\top \nabla\Big(\sum_{i=1}^m &f(\cdot,x_{k+1}^{-i}) \Big ) \Big|_{x_{k+1}} \nonumber \\ &\le (\sqrt{m}L-2c)\|x_{k+1}-x_{k}\|^2,
\end{align}
which in view of \eqref{eq:if2} can be rewritten as
\begin{align}\label{eq:VI1e-2}
(x_{k+1}-x_k)^\top \nabla &f(x_{k+1}) \le (\sqrt{m}L-2c)\|x_{k+1}-x_{k}\|^2.
\end{align}
By \eqref{eq:DL4-2} and \eqref{eq:VI1e-2}, we then have that
\begin{align}\label{eq:DL5-2}
mf(&x_{k+1}) \le m f(x_k) \nonumber \\ & + \big ( -c +(m-1)(\sqrt{m}L-2c) \big ) \|x_{k+1}-x_k\|^2.
\end{align}
If we choose the regularization coefficient $c$ according to \eqref{eq:coef_c},
then the term $\|x_{k+1}-x_k\|^2$ in \eqref{eq:DL5-2} is multiplied by a negative constant, and hence \eqref{eq:DL5-2} shows that $f(x_k)$ is a non-increasing sequence, i.e., for all $k \geq 0$, $f(x_{k+1}) \leq f(x_k)$.

Moreover, by \eqref{eq:DL5-2} and for a given $N \in \mathbb{R}_+$, we have that
\begin{align}
& \sum_{k=1}^N -\big ( -c +(m-1)(\sqrt{m}L-2c) \big ) \|x_{k+1}-x_k\|^2 \nonumber \\
&\leq m \sum_{k=1}^N \big ( f(x_k) - f(x_{k+1}) \big ) = m \big (f(x_1) - f(x_{N+1}) \big ).
\end{align}
Notice that $f(x_1) - f(x_{N+1}) \geq 0$ due to the fact that $\{f(x_k)\}_{k\geq 0}$ is non-increasing.
If $c$ is chosen then according to \eqref{eq:coef_c}, and letting $N \to \infty$, the last statement implies that $\sum_{k=1}^\infty \|x_{k+1}-x_k\|^2 < \infty$, thus $\lim_{k \to \infty} \| x_{k+1} -x_k \| = 0$, and hence
\begin{align}
\lim_{k \to \infty} \| \widetilde{T}(x_k) -x_k \| = 0. \label{eq:iterate_conv}
\end{align}

However, by \eqref{eq:DL5-2}, and under the choice of $c$ in \eqref{eq:coef_c}, we have that $\{f(x_k)\}_{k \geq 0}$ is a monotonically decreasing sequence, i.e., $f(x_{k+1}) < f(x_k)$ for all $k \geq 0$, but for the case where $x_{k+1} = x_k$ for some $k \geq 0$.
The latter case implies that there exists $\bar{k} \geq 0$ such that $x_{\bar{k}+1} = x_{\bar{k}}$. Since $x_{\bar{k}+1} = \widetilde{T}(x_{\bar{k}})$, the last statement implies that $x_{\bar{k}}$ is a fixed-point of $\widetilde{T}$, and, due to Corollary \ref{cor:M_FP_Ttilde}, will be a minimizer of $\mathcal{P}$.

Consider now the former case, where $\{f(x_k)\}_{k \geq 0}$ is a monotonically decreasing sequence. Since $x_k \in X$ for all $k \geq 0$, and $X$ is compact due to Assumption \ref{ass:grad_Lip}, $\{f(x_k)\}_{k \geq 0}$ is bounded below. Therefore, the sequence $\{f(x_k)\}_{k \geq 0}$ is convergent. Consider any convergent subsequence of $\{x_k\}_{k \geq 0}$ and denote by $x^*$ its limit point. By \eqref{eq:iterate_conv}, we then have that $\widetilde{T}(x_k)$ converges to $x^*$ as well. The latter, together with the fact that $\widetilde{T}$ is continuous implies that $\widetilde{T}(x^*) = x^*$, i.e., $x^*$ is a fixed-point of $\widetilde{T}$.
Note that since $\widetilde{T}$ is single-valued as an effect of being strictly convex due to the presence of the regularization term, continuity follows from 5.22 in \cite{Rockafellar_Wets_1998} (p. 162) if the objective function in \eqref{eq:Tmapsum} is continuous differentiable, the set $X$ is compact, and the minimum in \eqref{eq:Tmapsum} as a function of $x$ is continuous. Continuous differentiability and compactness emanate from Assumption \ref{ass:grad_Lip}, whereas it is shown in Exercise 1.19 in \cite{Rockafellar_Wets_1998} (p. 18) that, under the stated assumptions and since the constraint set does not depend on $x$, the minimum is a continuous function of $x$. Hence,  $\widetilde{T}$ is continuous and as shown above $x^*$ belongs to its fixed-points.
Due to the equivalence between the fixed-points of $\widetilde{T}$ and the minimizers of $\mathcal{P}$ shown in Corollary \ref{cor:M_FP_Ttilde}, the limit point $x^*$ of $\{x_k\}_{k \geq 0}$ is also a minimizer of $\mathcal{P}$, hence $\lim_{k \to \infty} \mathrm{dist}(x_k,X^*) = 0$, thus concluding the proof.

It should be noted that the last part of the proof is motivated by the arguments in p. 214, Chapter 3 of \cite{Bertsekas_Tsitsiklis_1997}.

\subsubsection*{Proof of Lemma \ref{lm:Lip_constant_quad}}

Under Assumption \ref{ass:Convexity}, and by the definition of $Q_d, Q_z$, we have that for any $x,z \in X$,
$\sum_{i=1}^m f(z^i,x^{-i})
= \sum_{i=1}^m (z^i)^\top Q_{i,i} z^i + 2 (x^{-i})^\top Q_{-i,i} z^i + q_i^\top z^i 
 = z^\top Q_d z + 2 x^\top Q_z z + q^\top z$,
where in the last step we used the fact that $Q_z = Q_z^\top$. By the last statement we then have that
$\nabla \Big(\sum_{i=1}^m f(\cdot,x^{-i}) \Big ) \Big|_{z} = 2 Q_d z + 2 Q_z x + q.$
Following an analogous derivation with $y \in X$ in place of $x$, we have that
$\nabla \Big(\sum_{i=1}^m f(\cdot,y^{-i}) \Big ) \Big|_{z} = 2 Q_d z + 2 Q_z y + q$.
Therefore,
\begin{align}
\|\nabla \Big(&\sum_{i=1}^m f(\cdot,x^{-i}) \Big ) \Big|_{z} - \nabla\Big(\sum_{i=1}^m f(\cdot,y^{-i}) \Big ) \Big|_{z}\| \nonumber \\ &= \| 2 Q_z (x-y) \|  \leq 2 \lambda^{\max}_{Q_z} \|x-y\|,
\end{align}
thus concluding the proof.

\end{document}